\makeatother \usepackage{babel}
\newcommand{\E}{\mathbb{E}} %
\newcommand{\N}{\mathbb{N}}
\renewcommand{\L}{\mathbb{L}}
\renewcommand{\P}{\mathbb{P}} %
\newcommand{\R}{\mathbb{R}} %
\newcommand{\V}{\operatorname{Var}}
\newcommand{\supp}{\operatorname{supp}}
\newcommand{\C}{\operatorname{Cov}}
\newcommand{\B}{\operatorname{Bias}}
\newcommand{\vech}{\operatorname{vech}}
\theoremstyle{plain} \newtheorem{thm}{\protect\theoremname}
\theoremstyle{plain} \newtheorem{prop}{\protect\propositionname}
\theoremstyle{plain} \newtheorem{assumption}{\protect\assumptionname}
\theoremstyle{plain} \newtheorem{cor}{\protect\corollaryname}
\theoremstyle{plain} \newtheorem{lem}{\protect\lemmaname}
\theoremstyle{remark} 
\theoremstyle{definition} 
\theoremstyle{definition} 
\theoremstyle{definition} 
\providecommand{\assumptionname}{Assumption}
\providecommand{\lemmaname}{Lemma}
\providecommand{\propositionname}{Proposition}
\providecommand{\remarkname}{Remark}
\providecommand{\corollaryname}{Corollary}
\providecommand{\theoremname}{Theorem}
\providecommand{\definitionname}{Definition}
\providecommand{\notationname}{Notation}
\providecommand{\conditionname}{Condition}
\begin{document}

\date{}

\title{Efficient estimation of conditional covariance matrices for
  dimension reduction}

\author[1]{S\'{e}bastien Da Veiga} %
\author[2]{Jean-Michel Loubes} %
\author[2]{Maikol Sol\'{i}s}

\affil[1]{Institut Fran\c{c}ais du P\'{e}trole, Paris, France.}%
\affil[2]{Institut de Math\'{e}matiques de Toulouse, Universit\'{e} Paul
  Sabatier, Toulouse, France.}

\renewcommand\Authands{ and }
\maketitle

\begin{abstract}
  Let $\boldsymbol{X}\in \R^p$ and $Y\in \R$. %
  In this paper we propose an estimator of the conditional covariance
  matrix, $\C(\E[\boldsymbol{X}\vert Y])$, in an inverse regression
  setting. %
  Based on the estimation of a quadratic functional, this methodology
  provides an efficient estimator from a semi parametric point of
  view. %
  We consider a functional Taylor expansion of
  $\C(\E[\boldsymbol{X}\vert Y])$ under some mild conditions and the
  effect of using an estimate of the unknown joint distribution.
  The asymptotic properties of this estimator are also provided.
\end{abstract}

\section{Introduction}

\footnotetext[1]{Institut Fran\c{c}ais du P\'{e}trole, Paris,
  France.}%
\footnotetext[2]{Institut de Math\'{e}matiques de Toulouse, Universit
  Paul Sabatier, Toulouse, France.}

Consider the nonparametric regression

\begin{equation*}
  Y = \varphi (\bm{X}) + \epsilon
\end{equation*}

where $\bm{X}\in\R^{p}$, $Y\in\R$, $\varphi$ is a unknown function
from $\R^p$ to $\R,$ and $\epsilon$ is random noise with
$\E[\epsilon]=0$. %

Assume we observe an independent identically distributed sample,
$\bm{X}_k=(X_{1k}, \ldots, X_{pk})$ and $Y_k$ for $k=1, \ldots, n$. %
If we face a model with more variables than observed data (i.e., $p\gg
n$), the high-dimensional setting blurs the relationship between
$\bm{X}$ and $Y$. %
The literature calls this phenomenon the \emph{curse of
  dimensionality}. %


Many methods have been developed to overcome this issue. %
In particular, in ~\cite{li1991sliced} the sliced inverse regression
method is proposed. %

The authors considered the following model

\begin{equation}
  \label{eq:taylor:semipara_model}
  Y = \phi(\upsilon_{1}^{\top}\bm{X}, \ldots,
  \upsilon_{K}^{\top}\bm{X}, \epsilon)
\end{equation}

where the $\upsilon$'s are unknown vectors in $\R^{p}$, the
$\epsilon_k$'s are independent of $X_k$, and $\phi$ is an arbitrary
function in $\R^{K+1}$. %

This model gathers all the relevant information about the variable,
$Y$, with only a projection onto the $K\ll p$ dimensional subspace,
$(\upsilon_{1}^{\top}\bm{X}, \ldots, \upsilon_{K}^{\top}\bm{X})$. %
If $K$ is small, this method reduces the dimension by estimating the
$\upsilon$'s efficiently. %

We call the $\upsilon$'s effective dimension reduction directions. %
This method is a semi parametric method since the unknown density
blurs the estimation of the parameters $\upsilon$'s which are the
projection directions.

For a review on sliced inverse regression methods, we refer to
\citet{li1991sliced, li1991rejoinder}, \cite{duan1991slicing}
\cite{hardle1991slicedcomment}, and references therein. %
In short, the eigenvectors associated with the largest eigenvalues of
$\C(\E[\bm{X}\vert Y])$ are the model \eqref{eq:taylor:semipara_model}
effective dimension reduction directions. %
Therefore, if we better estimate the conditional covariance,
$\C(\E[\bm{X}\vert Y])$, then we will better approximate the reduction
dimension space. %
This conditional covariance can be written as

\begin{equation*}
  \Sigma = \C(\E[\boldsymbol{X}\vert Y]) = \E[\E[\boldsymbol{X}\vert
  Y]\E[\boldsymbol{X}\vert Y]^{\top}] -
  \E[\boldsymbol{X}]\E[\boldsymbol{X}]^{\top}.
\end{equation*}

where $A^{\top}$ denotes the transpose of $A$. %
Since $\E[\bm{X}]\E[\bm{X}]^{\top}$ can be estimated easily, we point
out that the estimation of the matrix, $\E[\E[\bm{X} \vert Y]
\E[\bm{X}\vert Y]^{\top}]$, is the part that we should focus on.

Therefore, we refer for instance to \citet{zhu1996asymptotics} and
\cite{ferre2003functional,ferre2005smoothed} who used kernel
estimators. Also, we refer to ~\citet{hsing1999nearest} who combined
the nearest neighbor and the sliced inverse regression and
to~\citet{bura2001estimating} who assumed a parametric form for the
conditional vector, $\E[\bm{X}\vert Y]$. Lastly, we refer to
~\citet{setodji2004kmeans} who used a k-means method and to
~\citet{cook2005sufficient} who rewrote the sliced inverse regression
as a least square minimization.

The estimation of each element of $\Sigma = (\sigma_{ij})$ for $i,j=1,
\ldots, p$ depends on the joint distribution of $(\bm{X},Y)$.
Therefore, we propose a plug-in estimator for each parameter,
$\sigma_{ij}$. %
The quadratic functional estimator of $\E[\E[\boldsymbol{X}\vert
Y]\E[\boldsymbol{X}\vert Y]^{\top}]$ is obtained using a Taylor
expansion based on the ideas of \cite{daveiga2013efficient}. %
The first order term drives the asymptotic convergence while the
higher order terms will be shown to be negligible. %
These ideas are driven by previous studies from
\citet{laurent1996efficient} on the estimation of quadratic
integrals. %
Finally, we obtain a semi parametric estimate which is shown to be
efficient. %
This estimator offers an alternative method for plug-in methods for
conditional covariance matrices with minimum variance properties.

The organization of this paper is as follows. %
Section \ref{sec:taylor:Methodology} motivates our investigation of
$\C(\E[\boldsymbol{X}\vert Y])$ using a Taylor approximation. %
In Section \ref{sub:taylor:Hypothesis-and-Assumptions} the notations
and hypotheses are set up. %
We demonstrate, in Section \ref{sub:taylor:Efficient-Estimation-of},
the efficient convergence of each coordinate for our estimator. %
We also state the asymptotic normality for the whole matrix. %
For the quadratic term of the Taylor expansion of
$\C(\E[\boldsymbol{X}\vert Y])$, we find an asymptotic bound for the
variance in Section \ref{sec:taylor:Estimation-of-quadratic}. %
All of the technical Lemmas and related proofs will be found in
Sections \ref{sec:taylor:Technical-Results} and
\ref{sec:taylor:Proofs}, respectively. %

\section{Methodology}
\label{sec:taylor:Methodology}

Let $\boldsymbol{X}\in\mathbb{R}^{p}$ be a squared integrable random
vector with $p\geq1$ and let $Y\in\mathbb{R}$ be a random variable. %
We will denote $X_i$ and $X_j$ as the $i$-th and $j$-th coordinates of
$\boldsymbol{X}$, respectively. %
We denote by $f_{ij}(x_i, x_j, y)$ the joint density of the vector,
$(X_i, X_j, Y)$ for $i, j= 1\ldots p$. %
Recall that the density function, $f_{ij}$, depends on the indices,
$i$ and $j$, namely for each triplet $(X_i, X_j, Y)$ there exists a
joint density function called $f_{ij}(x_i, x_j, y)$. %
For the sake of simplicity, we will denote $f_{ij}$ only by $f$ to
avoid cumbersome notation. %
When $i$ is equal to $j$, we will call the joint density of $(X_i, Y)$
by $f_i(x_i, y)$, for $i=1, \ldots, p$. %
When the context is clear, we can name $f_i$ simply by $f$. %
Finally, let $f_Y(\cdot)= \int_\R f(x_{i}, x_{j}, \cdot)\, dx_{i}\,
dx_{j}$ be the marginal density function with respect to $Y$. %
Also, without loss of generality, we assume that the variables,
$\bm{X}$ are centered, i.e., $\E[\bm{X}] = 0$. %

Given a sample of $(\boldsymbol{X}, Y)$, we aimed to study the
asymptotic and efficiency properties of $\sigma_{ij}$. %
Then we will obtain similar results for the whole matrix $\Sigma$. %

The work of \cite{laurent1996efficient} and
\cite{daveiga2013efficient} already deals with the estimation of the
diagonal of $\C(\E[\boldsymbol{X}\vert Y])$. %
In this work, we shall extend their methodologies to the case $i \neq
j $ in order to find an alternative estimator for the sliced inverse
regression directions.

Recall that
\begin{align*}
  \Sigma = \C(\E[\boldsymbol{X}\vert Y]) & =\E[\E[\boldsymbol{X}\vert
  Y]\E[\boldsymbol{X}\vert Y]^{\top}].
\end{align*}

We then defined each entry of the conditional covariance matrix,
$\Sigma$ as %
\begin{equation*}
  \sigma_{ij} = \E[\E[X_i\vert Y]\E[X_j\vert Y]^{\top}]
  \quad i, j=1, \ldots, p.
\end{equation*}

Notice that we can write each $\sigma_{ij}$ for $i\neq j$ as
\begin{multline}
  \label{eq:taylor:sigma_ij}
  T_{ij}(f) = \sigma_{ij} = \int \left(\frac{\int x_{i}f(x_{i}, x_{j},
      y)\, dx_{i}\, dx_{j}}
    {f_Y(y)}\right) \\
  \left(\frac{\int x_{j}f(x_{i}, x_{j}, y)\, dx_{i}\, dx_{j}}
    {f_Y(y)}\right) f(x_{i}, x_{j}, y)\, dx_{i}\, dx_{j}\, dy.
\end{multline}
where
\begin{multline}
  \label{eq:taylor:Def_Tijf}
  T_{ij}(\psi) = \int\left(\frac{\int x_{i}\psi(x_{i}, x_{j}, y)\,
      dx_{i}\, dx_{j}}
    {\int \psi(x_{i}, x_{j}, y)\, dx_{i}\, dx_{j}}\right) \\
  \left(\frac{\int x_{j}\psi(x_{i}, x_{j}, y)\, dx_{i}\, dx_{j}} {\int
      \psi(x_{i}, x_{j}, y)\, dx_{i}\, dx_{j}}\right) \psi(x_{i},
  x_{j}, y) \, dx_{i}\, dx_{j}\, dy. %
\end{multline}
and $\psi$ is a square, integrable function in $\L^2(dx_i\, dx_j,
dy)$.

The functional, $T_{ij}(\psi)$, is defined for any square integrable,
$\psi$. %
If we take specifically $\psi=f$, then the functional, $T_{ij}(f)$, is
equal to the parameter $\sigma_{ij}$ defined in equation
\ref{eq:taylor:sigma_ij}. %

In order to obtain an estimator for $\sigma_{ij}$, we used a non
parametric estimator of $f$ and computed the estimation error. %
Suppose that we observe $(X_{ik}, X_{jk}, Y_k), \ k=1, \ldots, n$ as
an independent and identically distributed sample of $(X_i, X_j,
Y)$. %
To get rid of any dependency issues, we split this sample into two
independent subsamples of size $n_1$ and $n_2=n=n_1$. %
The first sample is used to build $\hat{f}$ as a preliminary estimator
of $f$. %
The other subsample will be used to estimate the parameters of the
conditional covariance matrix, $\sigma_{ij}$. %
The main idea is to expand $T_{ij}(f)$ in a Taylor series around a
neighborhood of $\hat{f}$. %

More precisely, we defined an auxiliar function, $F:[0,
1]\rightarrow\R$;
\begin{equation*}
  F(u) = T_{ij}(uf+(1-u)\hat{f})
\end{equation*}

with $u\in[0, 1]$. %

The Taylor expansion of $F$ between $0$ and $1$ up to the third order
is

\begin{equation}
  \label{eq:taylor:F-Taylor}
  F(1) = F(0)
  + F^{\prime}(0)
  + \frac{1}{2}F^{\prime\prime}(0)
  + \frac{1}{6}F^{\prime\prime\prime}(\xi)(1-\xi)^{3}
\end{equation}

for some $\xi\in[0, 1]$. %

Moreover, we have
\begin{align*}
  F(1) & =T_{ij}(f)\\
  F(0) & =T_{ij}(\hat{f})
\end{align*}

To simplify the notation set
\begin{equation*}
  m_{i}(f_{u}, y) =
  \frac{\int x_{i} f_{u}(x_{i}, x_{j}, y)\, dx_{i}\, dx_{j}}
  {\int f_{u}(x_{i}, x_{j}, y)\, dx_{i}\, dx_{j}},
\end{equation*}
where $f_{u}=uf+(1-u)\hat{f}$, for all $u$ belonging to $[0, 1]$. %
Notice that if $u=0$ then $m_{i}(f_{0}, y)=m_{i}(\hat{f}, y)$.%

We can rewrite $F(u)$ as
\begin{equation*}
  F(u) = \int m_{i} (f_{u}, y) m_{j} (f_{u}, y) f_{u}(x_{i}, x_{j}, y)
  \, dx_{i}\, dx_{j}\, dy. %
\end{equation*}

The next Proposition provides the $T_{ij}(f)$ of the Taylor
expansion. %

\begin{prop}[Linearization of the operator $T$]
  \label{prop:taylor:Decomposition_Taylor_Tijf} For the functional
  $T_{ij}(f)$ defined in Equation \eqref{eq:taylor:Def_Tijf}, the
  following decomposition holds
  \begin{multline}
    \label{eq:taylor:T-ij-Taylor}
    T_{ij}(f) = \int H_{1}(\hat{f}, x_{i}, x_{j}, y) f(x_{i}, x_{j},
    y)\,  dx_{i}\, dx_{j}\, dy \\
    + \int H_{2}(\hat{f}, x_{i1}, x_{j2}, y) f(x_{i1}, x_{j1}, y)
    f(x_{i2}, x_{j2}, y)\, dx_{i1}\, dx_{j1}\, dx_{i2}\ dx_{j2}\, dy
    \\
    + \Gamma_{n}
  \end{multline}
  where
  \begin{align}
    \label{eq:taylor:Gamma}
    H_{1}(\hat{f}, x_{i}, x_{j}, y) & = x_{i} m_{j}(\hat{f}, y) +
    x_{j}m_{i}(\hat{f}, y) - m_{i}(\hat{f}, y) m_{j}(\hat{f},
    y)\nonumber \\ 
    H_{2}(\hat{f}, x_{i1}, x_{j2}, y) & = \frac{1}{\int\hat{f}(x_{i},
      x_{j}, y)\, dx_{i}\, dx_{j}} (x_{i1} - m_{i}(\hat{f}, y))
    (x_{j2} - m_{j}(\hat{f}, y))\nonumber\\ 
    \Gamma_{n} & =\frac{1}{6} F^{\prime\prime\prime}(\xi) (1-\xi)^{3},
  \end{align}
  for some $\xi\in]0, 1[$. %
\end{prop}

This decomposition splits $T_{ij}(f)$ into three parts. %
A linear functional of $f$, which is easily estimated, a quadratic
functional, and an error term, $\Gamma_n$. %
The main part of this work was to control the quadratic term of
Equation \eqref{eq:taylor:T-ij-Taylor}. %
Since we used $n_{1}<n$ to build a preliminary approximation,
$\hat{f}$, we then used a sample of size $n_{2}=n-n_{1}$ to estimate
$\sigma_{ij}$. %
Note that the first term in Equation \eqref{eq:taylor:T-ij-Taylor} is
a linear functional in $f$, therefore its empirical estimator is
\begin{equation}
  \label{eq:taylor:estim-linear}
  \frac{1}{n_{2}}
  \sum_{k=1}^{n_{2}}
  H_{1}(\hat{f}, X_{ik}, X_{jk}, Y_{k}).
\end{equation}

Conversely, the second term is a nonlinear functional of $f$. %
It is a particular part of the issue of the estimation of general
quadractic functionals.

\begin{equation*}
  \theta(f) = \int \eta(x_{i1}, x_{j2}, y) f(x_{i1}, x_{j1}, y)
  f(x_{i2}, x_{j2}, y) dx_{i1} dx_{j1} dx_{i2} dx_{j2} dy
\end{equation*}

for $\eta:\R^{3}\rightarrow\R$ is a bounded function. %
A general estimation procedure will be given in
Section~\ref{sec:taylor:Estimation-of-quadratic}, extending the method
developed by~\cite{daveiga2013efficient}. %

The estimation of this term suggests to plug-in a preliminar estimator
of $f$, namely $\hat{f}$ (built on an independent sample of size
$n_1$), then an estimator of the quadratic functional. %
We chose to consider projection type estimators onto a finite subset,
$M_n$, of functional basis, $p_l(x_i, x_j, y)$, for $l \geq 1$. %
This leads to the final estimator that will be studied in this paper

\begin{multline}
  \label{eq:taylor:full_estimator}
  \hat{\sigma}_{ij} =
  \frac{1}{n_{2}}\sum_{k=1}^{n_{2}}H_{1}\bigl(\hat{f}, X_{ik}, X_{jk},
  Y_k\bigr) + \frac{1}{n_{2}(n_{2}-1)} \sum_{l\in M_n} \sum_{k\neq
    k^{\prime}=1}^{n_{2}}p_{l} \bigl(X_{ik}, X_{jk}, Y_{k}\bigr) \\
  \int p_{l} \bigl(x_{i}, x_{j}, Y_{k^{\prime}}\bigr)
  H_{3}\bigl(\hat{f}, x_{i}, x_{j}, X_{ik^{\prime}},
  X_{jk^{\prime}}, Y_{k^{\prime}}\bigr) \, dx_{i} \, dx_{j}\\
  - \frac{1}{n_{2}(n_{2}-1)} \sum_{l, l^{\prime}\in M_n} \sum_{k\neq
    k^{\prime}=1}^{n_{2}} p_{l}\bigl(X_{ik}, X_{jk}, Y_{k}\bigr)
  p_{l^{\prime}}\bigl(X_{ik^{\prime}}, X_{jk^{\prime}},
  Y_{k^{\prime}} \bigr)\\
  \int p_{l}\bigl(x_{i1}, x_{j1}, y\bigr) p_{l^{\prime}}\bigl(x_{i2},
  x_{j2}, y\bigr) H_{2}\bigl(\hat{f}, x_{i1}, x_{j2}, y\bigr) \,
  dx_{i1} \, dx_{j1} \, dx_{i2} \, dx_{j2}dy.
\end{multline}

where $H_{3}(f, x_{i1}, x_{j1}, x_{i2}, x_{j2}, y) = H_{2}(f, x_{i1},
x_{j2}, y)+H_{2}(f, x_{i2}, x_{j1}, y)$ and $n_{2}=n-n_{1}$. %
Note that the term, $\Gamma_{n}$, of Equation
\eqref{eq:taylor:T-ij-Taylor} is a remaining term that will be shown
to be negligible compared to the other terms.

We will prove that under some smoothness conditions, the only term
that drives the asymptotic properties of this estimator is $H_1$,
which will ensure the efficiency of the estimation procedure.

\section{Main Results}

In this section we provide the asymptotic behavior of the estimator
for $\sigma_{ij}$. %
Recall that in Section \ref{sec:taylor:Methodology} we constructed the
functional operator, $T_{ij}(f)$, by plugging-in a preliminary
estimator of the joint density, $f(x_i, x_j, y)$, into Equation
\eqref{eq:taylor:T-ij-Taylor}. %
However, in Equation \eqref{eq:taylor:full_estimator}, we have
introduced the functional basis, $p_l(x_i, x_j, y)$, which allows an
estimate of $\sigma_{ij}$ using a projection onto a finite subset,
$M_n$. %

In the next Subsection, we will describe some regularity conditions on
these components in order to ensure the convergence of our
estimator. %
Moreover, in Subsection \ref{sub:taylor:Efficient-Estimation-of} we
shall prove the asymptotic normality and efficiency of
$\hat{\sigma}_{ij}$. %

\subsection{Notations and Assumptions}
\label{sub:taylor:Hypothesis-and-Assumptions}


Consider the following notations. %
Let $a$ and $b$ be real numbers where $a<b$. %
Let, for a fixed $i$ and $j$, $\mathbb{L}^{2}(\, dx_{i}\, dx_{j}\,
dy)$ be the squar integrable functions in the cube, $[a, b]^3$. %
Moreover, let $(p_{l}(x_{i}, x_{j}, y))_{l\in \N}$ be an orthonormal
basis of $\mathbb{L}^{2}(\, dx_{i}\, dx_{j}\, dy)$. %
Let us denote $a_{l}=\int p_{l}f$ as the scalar product of $f$ with
$p_{l}$. %

Moreover, we can decompose the basis, $p_l(x_i, x_j, y)$, as
$\alpha_{l_{\alpha}}(x_{i}, x_{j}) \beta_{l_{\beta}}(y)$ with
$l_{\alpha}, l_{\beta} \in \N$. %
The set of functions, $\alpha_{l_{\alpha}}(x_{i}, x_{j})$ and
$\beta_{l_{\beta}}(y)$, are orthonormal bases in
$\mathbb{L}^{2}(dx_{i}\, dx_{j})$ and $\mathbb{L}^{2}(dy)$,
respectively.


We considered the following subset of $\mathbb{L}^{2}(\, dx_{i}\,
dx_{j}\, dy)$
\begin{equation*}
  \mathcal{E} = \left\{\sum_{l\in \N}e_{l}p_{l},\ \text{such as}\
    (e_{l})_{l\in \N} \ \text{satisfies} \ \sum_{l\in
      \N}\left|\frac{e_{l}}{c_{l}}\right|^{2}<1\right\}
\end{equation*}
where $(c_{l})_{l\in \N}$ is a decreasing fixed sequence. %

Assume that the triplet, $(X_{i}, X_{j}, Y)$, has a bounded joint
density, $f$, on $[a, b]^3$. %
Moreover, we suppose that $f$ belongs to the ellipsoid,
$\mathcal{E}$. %

Furthermore, $X_{n}{\overset{\mathcal{D}}{\longrightarrow}} X$
(resp. $X_{n}{\overset{\mathcal{P}}{\longrightarrow}} X$) denotes the
convergence \emph{in distribution} or \emph{weak} convergence
(resp. convergence \emph{in probability}) of $X_{n}$ to $X$. %
Additionally, we denote the support of $f$ as $\supp f$. %

For a fixed $n$, we chose a finite subset of indices belonging to $\N$
called $\left(M_{n}\right)_{n\geq 1}$. %
This sequence of indices, $M_n$, increases as $n$ increases. %
Also, $\vert M_{n}\vert$ will represent the cardinal of $M_{n}$. %

We define the partial projection of $f$ into the basis, $p_l$, with
only $M_n$ elements as $S_{M_{n}}f=\sum_{l\in M_{n}}a_{l}p_{l}$. %

We shall make three main assumptions:

\begin{assumption}
  \label{ass:taylor:A1}
  For all $n\geq 1$ there is a finite subset, $M_{n}\subset \N$, such
  that

  \begin{equation*}
    \sup_{l\notin M_{n}} \vert c_{l}\vert ^{2} \approx
    \sqrt{\vert{M_{n}}\vert}/n
  \end{equation*}

  ($A_{n}\approx B$ means $\lambda_{1} \leq A_{n}/B_n \leq
  \lambda_{2}$ for some positives constants $\lambda_{1}$ and
  $\lambda_{2}$). %
\end{assumption}

\begin{assumption}
  \label{ass:taylor:A2}We assume that $\supp f\subset[a, b]^3$ and for
  all $(x, y, z)\in\supp f$, $0<c_1\leq f(x, y, z)\leq c_2$ with $c_1,
  c_2\in\R$. %
\end{assumption}

\begin{assumption}\label{ass:taylor:A3}
  It is possible to find an estimator $\hat{f}$ of $f$ built with
  $n_{1}\approx n/\log\left(n\right)$ observations, such that for
  $\epsilon>0$,

  \begin{equation*}
    \forall(x, y, z)\in\supp f, \ 0<c_1 - \epsilon \leq \hat{f}(x, y,
    z) \leq c_2 + \epsilon
  \end{equation*}

  and,

  \begin{equation*}
    \mathbb{E}_{f}\Vert{\hat{f} - f}\Vert_{3}^{6}\leq Cn_{1}^{-6\lambda}
  \end{equation*}

  for some $\lambda>1/6$ and some constant $C$, not depending on $f$
  belonging to the ellipsoid $\mathcal{E}$. %
\end{assumption}

Assumption \ref{ass:taylor:A1} is necessary to bound the bias and
variance of $\hat{\theta}_{n}$. %
This condition allows us to control the mean square error of the queue
when we estimate $f$ by $S_{M_n}f$. %
In other words, Assumption \ref{ass:taylor:A1} balances the growing
size of the set, $M_n$, with the decay rate of the sequence, $c_l$. %
This behavior is strongly related to the smoothness of the density
function, $f$, and the size of the coefficients, $c_l$. %



One can find examples of a functional basis that satisfies Assumption
\ref{ass:taylor:A1}. %
For instance, we used the Example {2} from
\citet{laurent1996efficient} and its notation. %
Assume that $f$ belongs to some H\"{o}lder space with an index greater
than $s$. %
If a wavelet basis, $\tilde{\psi}$, has regularity, $r>s$, then
$f\in\mathcal{E} $ where

\begin{equation*}
  \mathcal{E} = \left\{\sum_{j\geq 0} \sum_{\lambda\in\Lambda_j}
    a(\lambda) \tilde{\psi}_\lambda : \ \text{where}\ \sum_{j\geq 0}
    \sum_{\lambda\in\Lambda_j} 2^{2js}\vert a(\lambda)\vert^2 \leq 1
  \right\}.
\end{equation*}

See \citet{meyer1992wavelets} for further details. %

Moreover, if $s>p/4$ and

\begin{equation*}
  M_n = \left\lbrace \lambda \in \Lambda_j, j\leq j_0,
    2^{j_0}=n^{2/(p+4s)} \right\rbrace
\end{equation*}

then $\sup_{l\notin M_{n}} \vert{c_{l}}\vert^{2} \approx
\sqrt{\vert{M_{n}}\vert}/n$. %
Also, $\vert M_n\vert/n \to 0$ with

\begin{equation*}
  \vert M_n\vert \approx n^{2p/(d+4s)}, \quad \sup_{l\notin
    M_{n}}\vert{c_{l}}\vert^{2} \approx 2^{-2j_0^2}=n^{-4s/(p+4s)}.
\end{equation*}


Assumption \ref{ass:taylor:A2} and \ref{ass:taylor:A3} help to
establish that $\Gamma_{n}=O(1/n)$, i.e. the error term in Equation
\eqref{eq:taylor:T-ij-Taylor} is negligible. %

Notice that in Assumption \ref{ass:taylor:A3}, the function,
$\hat{f}$, converges to $f$ faster than some given rates. %
This property is strongly related to the fact that the joint density
function, $f$, is regular enough.



For instance, for $\bm{x} = (x_1,x_2,x_3)\in\R^{3}$, $s>0$ and $L>0$
we defined the Nikol'skii class of functions, $\mathcal{H}_{3}(s, L)$,
as the set of functions, $f:\R^3\to \R$, whose partials derivatives up
to order $r=\left\lfloor s\right\rfloor $ inclusive exist and for
$l=1,2,3$ satisfy %
\begin{equation*}
  \left\Vert{ \frac{\partial^r f(\bm{x}+\bm{h})}{\partial x_l^r} -
      \frac{\partial^r f(\bm{x})}{\partial x_l^r} } \right\Vert_{3}
  \leq L  \vert  h^{s-r}\vert \ \text{where}
  \ \bm{h} = (h,h,h)  \ \forall h\in\R. %
\end{equation*}

Therefore, if $f$ belongs to $\mathcal{H}_{3}(s, L)$ with $s>3/4$ then
Assumption~\ref{ass:taylor:A3} is satisfied. %
The proof of this assertion can be found in the work of
\cite{ibragimov1983estimation,ibragimov1984more}. %
We refer the reader to \cite{laurent1993fonctionnelles} for more
examples on a regular class of functions that satisfy Assumptions
\ref{ass:taylor:A1}, \ref{ass:taylor:A2}, and \ref{ass:taylor:A3}. %

\subsection{Asymptotic behavior of the estimator of $\sigma_{ij}$ and
  of the conditional covariance matrix}
\label{sub:taylor:Efficient-Estimation-of}

We have two separate asymptotics with respect to $n_1$ and $n_2$. %
Both are allowed to grow to infinity as the the size of the sample,
$n$, grows larger. %
Therefore in the theorems, the asymptotics will be given with respect
to $n$.

The following theorem provides the asymptotic behavior of
$\hat{\sigma}_{ij}$ for $i$ and $j$. %
\begin{thm}
  \label{thm:taylor:Asymp-norm-Tf-ij} Let Assumptions
  \ref{ass:taylor:A1}-\ref{ass:taylor:A3} hold and $\vert
  M_{n}\vert/n\to 0$ when $n\rightarrow\infty$. %
  Then,
  \begin{equation}
    \label{eq:taylor:Asymp-norm-Tf-ij}
    \sqrt{n} \bigl(\hat{\sigma}_{ij} - \sigma_{ij}\bigr) \overset{\mathcal{D}}{\longrightarrow} \mathcal{N} (0,
    {C_{ij}(f)}),
  \end{equation}
  and
  \begin{equation}
    \label{eq:taylor:Var-Tf-ij}
    \lim_{n\to\infty} n\, \E[{\hat{\sigma}_{ij} -
      \sigma_{ij}}]^{2} = C_{ij}(f),
  \end{equation}
  where
  \[
  C_{ij}(f) = \V({H_{1}(f, X_{i}, X_{j}, Y)})
  \]
\end{thm}

We can see that the asymptotic variance of $\sigma_{ij}$ depends only
on $H_{1}(f, X_{i}, X_{j}, Y)$. %
In other words, the linear part of Equation
\eqref{eq:taylor:T-ij-Taylor} controls the asymptotic normality of
$\sigma_{ij}$. %
This property entails the natural efficiency of $\hat{\sigma}_{ij}$. %

The next theorem produces the $\sigma_{ij}$'s semi parametric
Cram\'{e}r-Rao bound. %

\begin{thm}[Semiparametric Cram\'{e}r Rao bound]
  \label{thm:taylor:Efficiency-T-ij}
  Consider the estimation of
  \begin{equation*}
    \sigma_{ij} = \E[{\E[{X_i\vert Y}]\E[{X_j\vert Y}]^{\top}}]
  \end{equation*}
  for a random vector, $(X_i, X_j, Y)$, with joint density,
  $f\in\mathcal{E}$.

  Let $f_{0}\in\mathcal{E}$ be a density verifying the assumptions of
  Theorem~\ref{thm:taylor:Asymp-norm-Tf-ij}. %
  Then, for any estimator, $\hat{\sigma}_{ij}$, of $\sigma_{ij}$ and
  every family, $\{ \mathcal{V}_{r}(f_{0})\}_{r>0}$, of neighborhoods
  of $f_{0}$ we have
  \[
  \inf_{\{\mathcal{V}_{r}(f_{0})\}_{r>0}} \liminf_{n\to\infty}
  \sup_{f\in\mathcal{V}_{r}(f_{0})} n\, \mathbb{E}_f
  \bigl[\hat{\sigma}_{ij} - \sigma_{ij}\bigr]^{2}\geq C_{ij}(f_{0})
  \]
  where $\mathcal{V}_{r}(f_{0}) = \left\{ f:\Vert{f -
      f_{0}}\Vert_{2}<r\right\}$ for $r>0$.
\end{thm}

Theorems \ref{thm:taylor:Asymp-norm-Tf-ij} and
\ref{thm:taylor:Efficiency-T-ij} establish the asymptotic efficiency
of the estimator, $\hat{\sigma}_{ij}$, defined in Equation
\eqref{eq:taylor:full_estimator} which we summarize in the next
Corollary. %

\begin{cor}
  Let the assumptions of Theorem \ref{thm:taylor:Asymp-norm-Tf-ij} and
  \ref{thm:taylor:Efficiency-T-ij} hold. %
  Then, the estimator, $\hat{\sigma}_{ij}$, defined in Equation
  \eqref{eq:taylor:full_estimator} is asymptotically efficient.
\end{cor}


We have proved asymptotic normality entry by entry of the matrix,
$\Sigma = (\sigma_{ij})_{p \times p}$, using the estimator,
$\widehat{\Sigma} = (\hat{\sigma}_{ij})_{p \times p}$, defined in
Equation \eqref{eq:taylor:full_estimator}. %
To extend the result to the whole matrix, we introduce the
half-vectorization operator, \emph{$\vech$}. %
This operator stacks only the columns from the principal diagonal of a
square matrix downwards in a column vector. %
Formally, for a $p\times p$ matrix, $ A = (a_{ij})$,
\[
\vech(A) = \left[a_{11}, \cdots, a_{p1}, a_{22}, \cdots, a_{p2},
  \cdots, a_{33}, \cdots, a_{pp} \right]^{\top}. %
\]
Let $\boldsymbol{H_{1}(f)}$ be the matrix with entries defined by
$\left(H_{1}(f_{ij}, x_{i}, x_{j}, y)\right)_{i, j}$ if $i$ is
different from $j$ and $\left(H_{1}(f_{i}, x_{i}, x_{i}, y)\right)_{i,
  i}$ when $i$ is equal to $j$ and $i, j=1, \ldots, p$. %

Corollary \ref{cor:taylor:Asymp-norm-Tf} generalizes our previous
results to the vectorial case.

\begin{cor}
  \label{cor:taylor:Asymp-norm-Tf}
  Let Assumptions \ref{ass:taylor:A1}-\ref{ass:taylor:A3} hold and
  $\vert{M_{n}}\vert/n\to 0$ when $n\rightarrow\infty$. %
  Then ${\boldsymbol{\widehat{T}}}_n$ has the following properties:
  \begin{gather*}
    \sqrt{n} \vech \left(\widehat{\Sigma} - \Sigma \right)
    \xrightarrow[]{\mathcal{D}} \mathcal{N}(0, {\boldsymbol
      C(f)}), \label{eq:taylor:Asymp-norm-Tf}\\
    \lim_{n\rightarrow\infty} n\, \E\left[\vech \left(\widehat{\Sigma}
        - \Sigma\right) \vech\left(\widehat{\Sigma} -
        \Sigma\right)^\top \right]
    = \bm{C}(f)\label{eq:taylor:Var-Tf}\\
    \intertext{where the limit is taking element-wise and} \boldsymbol
    C(f) = \C({\vech(\boldsymbol{H_{1}}(f))}).\nonumber
  \end{gather*}
\end{cor}

The estimator, $\hat{\sigma}_{ij}$, is asymptotically normal with a
variance depending on the linear term of the Taylor development. %
Given this particular nature, it was possible to show the asymptotic
efficiency of $\hat{\sigma}_{ij}$. %
Therefore, among all of the estimators of $\sigma_{ij}$, the estimator
defined in Equation \eqref{eq:taylor:full_estimator} has the lowest
variance. %
The conclusions in Theorems \ref{thm:taylor:Asymp-norm-Tf-ij} and
\ref{thm:taylor:Efficiency-T-ij} depend on a precise estimation of the
quadratic term of $\hat{\sigma}_{ij}$, which is handled in the
following section. %

\section{Estimation of quadratic functionals}
\label{sec:taylor:Estimation-of-quadratic}

We have proved, in Section \ref{sub:taylor:Efficient-Estimation-of},
the asymptotic normality and found an efficient semi parametric
Cram\'{e}r-Rao bound of the estimator, $\hat{\sigma}_{ij}$ defined in
Equation \eqref{eq:taylor:full_estimator}. %
We used the Taylor decomposition \eqref{eq:taylor:T-ij-Taylor} to
construct the estimator $\hat{\sigma}_{ij}$. %
In the present section, we will build an estimator for the quadratic
term
\begin{equation*}
  \int H_{2}(\hat{f}, x_{i1}, x_{j2}, y) f(x_{i1}, x_{j1}, y)
  f(x_{i2}, x_{j2}, y) \, dx_{i1} \, dx_{j1} \, dx_{i2} \, dx_{j2} \, dy.
\end{equation*}
To this end, we built a general estimator of the parameter with the
form:
\[
\theta = \int\eta(x_{i1}, x_{j2}, y) f(x_{i1}, x_{j1}, y) f(x_{i2},
x_{j2}, y) \, dx_{i1} \, dx_{j1} \, dx_{i2} \, dx_{j2}\, dy,
\]
for $f\in\mathcal{E}$ and $\eta:\R^{3}\rightarrow\R$ is a bounded
function. %

Given $M=M_{n}$, a subset of $\N$, consider the estimator
\begin{multline}
  \label{eq:taylor:estim-non-linear}
  \hat{\theta}_{n} = \frac{1}{n(n-1)} \sum_{l\in M}\sum_{k\neq
    k^{\prime}=1}^{n} p_{l}(X_{ik}, X_{jk}, Y_{k})\\
  \int p_{l}(x_{i}, x_{j}, Y_{k^{\prime}}) \left(\eta(x_{i},
    X_{jk^{\prime}}, Y_{k^{\prime}}) + \eta(
    X_{ik^{\prime}}, x_{j}, Y_{k^{\prime}})\right) \, dx_{i} \, dx_{j}\\
  - \frac{1}{n(n-1)} \sum_{l, l^{\prime}\in M} \sum_{k\neq
    k^{\prime}=1}^{n} p_{l}(X_{ik}, X_{jk}, Y_{k}) p_{l^{\prime}}(X_{ik^{\prime}}, X_{jk^{\prime}}, Y_{k^{\prime}})\\
  \int p_{l}(x_{i1}, x_{j1}, y) p_{l^{\prime}}(x_{i2}, x_{j2}, y)
  \eta(x_{i1}, x_{j2}, y) \, dx_{i1} \, dx_{j1} \, dx_{i2} \, dx_{j2}
  \, dy.
\end{multline}

To simplify the presentation of Theorem
\ref{thm:taylor:Asymp-norm-Tf-ij}, we write $\psi(x_{i1}, x_{j1},
x_{i1}, x_{j2}, y) = \eta(x_{i1}, x_{j2}, y)+\eta(x_{i2}, x_{j1}, y)$
verifying
\begin{multline*}
  \int \psi(x_{i1}, x_{j1}, x_{i2}, x_{j2}, y) \, dx_{i1} \, dx_{j1}
  \, dx_{i2}
  \, dx_{j2} dy \\
  = \int \psi(x_{i2}, x_{j2}, x_{i1}, x_{j1}, y) \, dx_{i1} \, dx_{j1}
  \, dx_{i2} \, dx_{j2} \, dy.
\end{multline*}

With this notation we can simplify Equation
\eqref{eq:taylor:estim-non-linear} into
\begin{multline}
  \label{eq:taylor:estim-non-linear-simp}
  \hat{\theta}_{n} = \frac{1}{n(n-1)}\sum_{l\in M}\sum_{k\neq
    k^{\prime}=1}^{n} p_{l}(X_{ik}, X_{jk}, Y_{k}) \\
  \int p_{l}(x_{i}, x_{j}, Y_{k^{\prime}}) \psi(x_{i}, x_{j},
  X_{ik^{\prime}}, X_{jk^{\prime}}, Y_{k^{\prime}}) \, dx_{i} \, dx_{j}\\
  - \frac{1}{n(n-1)} \sum_{l, l^{\prime}\in M} \sum_{k\neq
    k^{\prime}=1}^{n}p_{l}(X_{ik}, X_{jk}, Y_{k}) p_{l^{\prime}}(X_{ik^{\prime}}, X_{jk^{\prime}}, Y_{k^{\prime}})\\
  \int p_{l}(x_{i1}, x_{j1}, y) p_{l^{\prime}}(x_{i2}, x_{j2}, y)
  \eta(x_{i1}, x_{j2}, y) \, dx_{i1} \, dx_{j1} \, dx_{i2} \, dx_{j2}
  \, dy.
\end{multline}

The bias of $\hat{\theta}$ is equal to
\begin{multline*}
  \label{eq:taylor:Bias_theta_quad-1}
  - \int(S_{M}f(x_{i1}, x_{j1}, y) - f(x_{i1}, x_{j1}, y))
  (S_{M}f(x_{i2}, x_{j2}, y) - f(x_{i2}, x_{j2}, y))\\
  \eta(x_{i1}, x_{j2}, y) \, dx_{i1} \, dx_{j1} \, dx_{i2} \, dx_{j2}
  \, dy.
\end{multline*}

The following Theorem gives an explicit bound for the
$\hat{\theta}_{n}$ variance. %
\begin{thm}
  \label{thm:taylor:quad-estimator}
  Let Assumption \ref{ass:taylor:A1} hold. %
  Then, if $\, \vert{M_{n}}\vert/n\rightarrow0$ when $n\rightarrow
  \infty$, then $\hat{\theta}_{n}$ has the following property

  \begin{equation*}
    \label{eq:taylor:Bound-MSE-quad}
    \left\vert{n\, \E\bigl[{\bigl(\hat{\theta}_{n} -
          \theta\bigr)^{2}}\bigr]-\Lambda(f, \eta)}\right\vert
    \le \gamma \left[\frac{\vert{M_{n}}\vert}{n} + \Vert{S_{M_{n}}f-f}\Vert_{2} +
      \Vert{S_{M_{n}}g - g}\Vert_{2}\right],
  \end{equation*}

  where $g(x_{i}, x_{j}, y) = \int f(x_{i2}, x_{j2}, y) \psi(x_{i},
  x_{j}, x_{i2}, x_{j2}, y) \, dx_{i2} \, dx_{j2}$, and
  \begin{multline*}
    \Lambda(f, \eta) = \int g(x_{i}, x_{j}, y)^{2} f(x_{i}, x_{j}, y)
    \, dx_{i} \, dx_{j} \, dy \\
    - \left(\int g(x_{i}, x_{j}, y) f(x_{i}, x_{j}, y) \, dx_{i} \,
      dx_{j} \, dy\right)^{2},
  \end{multline*}
  where $\gamma$ is a constant depending only on $\Vert f
  \Vert_{\infty}$, $\Vert{\eta}\Vert_{\infty}$ and $\Delta = (b -
  a)^2$. %
  Moreover, this constant is an increasing function of these
  quantities. %
\end{thm}

Note that Equation \eqref{eq:taylor:Bound-MSE-quad} implies that
\begin{equation*}
  \lim_{n\to\infty} n\, \E\bigl[\bigl(\hat{\theta}_{n} -
  \theta\bigr)^{2}\bigr] =  \Lambda(f, \eta).
\end{equation*}

We can control the quadratic term of $\hat{\sigma}_{ij}$, which is a
particular case of $\theta$ choosing $\eta(x_{i1}, x_{j2}, y) =
H_{2}(\hat{f}, x_{i1}, x_{j2}, y)$. %

We will show, in Proof \ref{proof:thm-asymp-tf-ij}, that $\Lambda(f,
\eta)\to0$ when $n\to \infty$. %
Consequently, the linear part of $\hat{\sigma}_{ij}$ governs its
asymptotic variance, which also yields asymptotic efficiency. %

\section{Conclusion}

In this paper, we proposed a new way to estimate
$\C({\E[{\boldsymbol{X}\vert Y}]})$, which is different from the usual
plug-in type of estimators. %
We used a general functional, $T_{ij}(f)$, depending on the joint
density function, $f$, of $(X_i, X_j, Y)$. %
In particular, we chose a suitable approximation, $\hat{f}$, of $f$
and constructed a coordinate-wise Taylor expansion around it up to the
third order. %
We call this estimator $\hat{\sigma}_{ij}$. %
This expansion serves to estimate $\C({\E[{\boldsymbol{X}\vert Y}]})$
using an orthonormal base of $\mathbb{L}^2(\, dx_i\, dx_j\, dy)$. %

We highlighted that $\hat{\sigma}_{ij}$ is asymptoticly normal with a
variance lead by the first order term. %
This behavior also causes efficiency from the Cram\'{e}r-Rao's point
of view. %
Again, the Cram\'{e}r-Rao bound depends only on the linear part of the
Taylor series.

With the help of the $\vech$ operator, we expanded our results to the
matrix estimator, $\widehat{\Sigma}$, formed with the entries,
$\hat{\sigma}_{ij}$. %
We showed that the $\boldsymbol{T}(f)$'s linear term guides the
variance for the $\widehat{\Sigma}$'s asymptotic normality. %

Even if we had principally aimed to study a new class of estimators
for $\C({\E[{\boldsymbol{X}\vert Y}]})$, we referred to
\citet{daveiga2013efficient} for some simulations in a context similar
to ours. %
In general, their numerical result behaves reasonably well despite the
complexity of its implementation. %
These results could also work in our framework and we will consider
them in a future article. %

The estimator, $\widehat{\Sigma}$, could have negative eigenvalues,
violating the semi positive definiteness of the covariance. %
From a practical point of view, we could project $\widehat{\Sigma}$
into the space of positive semi definite matrices. %
Therefore, we would first diagonalize $\widehat{\Sigma}$ and then
replace negative eigenvalues by 0. %
The resulting estimator is then semi positive definite. %
The works of \cite{bickel2008thresholding,bickel2008regularized} and
\cite{cai2010optimal}, present an extended discussion about techniques
on matrix regularization. %

This research constitutes a first step in the study of estimators
based on a Taylor series with minimum variance. %
To simplify the complex implementation of this estimator, we will
explore another kind of technique like nonparametric methods for
example. %





\section{Appendix}

\subsection{Proofs }
\label{sec:taylor:Proofs}

\begin{proof}[\textbf{Proof of Proposition
    \ref{prop:taylor:Decomposition_Taylor_Tijf}}]
  \textbf{}

  We need to calculate the first three derivatives of $F(u)$. %
  To ease the calculations, notice first that
  \begin{equation}
    \label{eq:taylor:m-i-u-prime}
    \frac{d}{du}{m_{i}(f_{u}, y)}u
    = \frac{\int \Bigl(x_{i} - m_{i}(f_{u}, y)\Bigr)\Bigl(f(x_{i},
      x_{j}, y) - \hat{f}(x_{i}, x_{j}, y)\Bigr) \, dx_{i} \,
      dx_{j}}{\int f_{u}(x_{i}, x_{j}, y) \, dx_{i} \, dx_{j}}.
  \end{equation}



  It is possible to interchange the derivate with the integral sign
  because $f$ and $ \hat{f} $ are bounded. Now, using Equation
  \eqref{eq:taylor:m-i-u-prime} and taking $u=0$ we have

  \begin{multline}
    \label{eq:taylor:F-prime-0}
    F^{\prime}(0) = \int\left[x_{i}m_{j}
      (\hat{f}, y)+x_{j}m_{i}(\hat{f}, y) - m_{i}(\hat{f}, y)m_{j}(\hat{f}, y)\right]\\
    \Bigl(f(x_{i}, x_{j}, y) - \hat{f}(x_{i}, x_{j}, y)\Bigr) \,
    dx_{i}\, dx_{j}\, dy.
  \end{multline}

  Deriving $m_{i}(f_{u}, y)m_{j}(f_{u}, y)$ using the same arguments
  as in Equation \eqref{eq:taylor:m-i-u-prime} and again taking $u=0$
  we get,

  \begin{multline}
    \label{eq:taylor:F-prime-prime-0}
    F^{\prime\prime}(0) = \int\frac{2}{\int\hat{f}(x_{i}, x_{j}, y) \,
      dx_{i} \, dx_{j}} \Bigl(x_{i1} - m_{i}(\hat{f}, y)) (x_{j2} -
    m_{j}(\hat{f},
    y)\Bigr) \\
    \Bigl(f(x_{i1}, x_{j1}, y) - \hat{f}(x_{i1}, x_{j1}, y)\Bigr) \\
    \Bigl(f(x_{i2}, x_{j2}, y) - \hat{f}(x_{i2}, x_{j2}, y)\Bigr) \,
    dx_{i1} \, dx_{j1} \, dx_{i2} \, dx_{j2} \, dy.
  \end{multline}

  Using the previous arguments we also find that
  \begin{multline}
    \label{eq:taylor:F-prime-prime-prime}
    F^{\prime\prime\prime}(u) = \int\frac{-6}{\int f_{u}(x_{i}, x_{j},
      y) dx_{i} dx_{j}} \Bigl(x_{i1}
    - m_{j}(f_{u}, y)\Bigr) \Bigl(x_{j2} - m_{j}(f_{u}, y)\Bigr)\\
    \Bigl(f(x_{i1}, x_{j1}, y) - \hat{f}(x_{i1}, x_{j1}, y)\Bigr)
    \Bigl(f(x_{i2}, x_{j2}, y) - \hat{f}(x_{i2}, x_{j2}, y)\Bigr)\\
    \Bigl(f(x_{i3}, x_{j3}, y) - \hat{f}(x_{i3}, x_{j3}, y)\Bigr) \,
    dx_{i1} \, dx_{j1} \, dx_{i2} \, dx_{j2} \, dx_{i3} \, dx_{j3} \,
    dy
  \end{multline}
  Replacing Equation \eqref{eq:taylor:F-prime-0},
  \eqref{eq:taylor:F-prime-prime-0}, and
  \eqref{eq:taylor:F-prime-prime-prime} into Equation
  \eqref{eq:taylor:F-Taylor} we get the desired decomposition.
\end{proof}

\begin{proof}[\textbf{Proof of Theorem
    \ref{thm:taylor:Asymp-norm-Tf-ij}}]
  \label{proof:thm-asymp-tf-ij}

  We will first control the remaining term of Equation
  \eqref{eq:taylor:Gamma},
  \[
  \Gamma_{n} = \frac{1}{6} F^{\prime\prime\prime}(\xi)(1-\xi)^{3}.
  \]
  Remember that
  \begin{align*}
    F^{\prime\prime\prime}(\xi) = & \ -6 \int \frac{\Bigl(x_{i1} -
      m_{i}(f_{\xi}, y)\Bigl) \Bigl(x_{j2}-m_{j}(f_{\xi},
      y)\Bigr)}{\Bigl(\int
      f_{\xi}(x_{i}, x_{j}, y) \, dx_{i} \, dx_{j} \Bigr)^{2}} \\
    & \left(f(x_{i1}, x_{j1}, y) - \hat{f}(x_{i1}, x_{j1}, y)\right)
    \left(f(x_{i2}, x_{j2}, y) - \hat{f}(x_{i2}, x_{j2}, y)\right)\\
    & \left(f(x_{i3}, x_{j3}, y) - \hat{f}(x_{i3}, x_{j3}, y)\right)
    \, dx_{i1} \, dx_{j1} \, dx_{i2} \, dx_{j2} \, dx_{i3} \, dx_{j3
    }\, dy,
  \end{align*}
  Assumptions \ref{ass:taylor:A1} and \ref{ass:taylor:A2} ensure that
  the first part of the integrand is bounded by a constant,
  $\mu$. Furthermore,
  \begin{align*}
    \left\vert{\Gamma_{n}}\right\vert \leq & \ \mu \int \left\vert
      {f(x_{i1}, x_{j1}, y) - \hat{f}(x_{i1}, x_{j1}, y)} \right\vert
    \left\vert {f(x_{i2}, x_{j2}, y) -
        \hat{f}(x_{i2}, x_{j2}, y)} \right\vert\\
    & \left\vert {f(x_{i3}, x_{j3}, y) - \hat{f}(x_{i3}, x_{j3},
        y)}\right\vert \, dx_{i1}
    \, dx_{j1} \, dx_{i2} \, dx_{j2} \, dx_{i3} \, dx_{j3} \, dy\\
    = & \ \mu \int \left(\int \left\vert{f(x_{i}, x_{j}, y) -
          \hat{f}(x_{i}, x_{j}, y)} \right\vert
      \, dx_{i} \, dx_{j} \right)^{3} \, dy\\
    \leq & \ \mu \Delta^3 \int \left\vert{f(x_{i}, x_{j}, y) -\
        \hat{f}(x_{i}, x_{j}, y)}\right\vert^{3} \, dx_{i} \, dx_{j}
    \, dy
  \end{align*}
  by the H\"older inequality. Then, $\E[{\Gamma_{n}^{2}}]$ is equal to
  $O(\mathbb{E}\Vert f - \hat{f}\Vert_{3}^{6})$. %
  Since $\hat{f}$ verifies Assumption \ref{ass:taylor:A3}, this
  quantity is of order $O(n_{1}^{-6\lambda})$. %
  We also assume $n_{1}\approx n/\log(n)$ and $\lambda>1/6$, then
  $n_{1}^{-6\lambda}=o\left(1/n\right)$. %
  Therefore, we get $\E[{\Gamma_{n}^{2}}] = o(1/n)$ which implies that
  the remaining term, $\Gamma_{n}$, is negligible.

  To prove the asymptotic normality of $\hat{\sigma}_{ij}$, we shall
  show that ${\sqrt{n}{(\hat{\sigma}_{ij}-T_{ij}(f))}}$ and
  \begin{multline}
    \label{eq:taylor:def-Z-ij}
    Z_{ij}^{(n)} = \frac{1}{n_{2}} \sum_{k=1}^{n_{2}} H_{1}\bigl(f,
    X_{ik}, X_{jk}, Y_k\bigr) \\
    - \int H_{1}(f, x_{i}, x_{j}, y) f(x_{i}, x_{j}, y)) \, dx_{i} \,
    dx_{j} \, dy
  \end{multline}
  have the same asymptotic behavior. %
  We can get for $ Z_{ij}^{(n)}$ a classic central limit theorem with
  variance
  \begin{align*}
    C_{ij}(f) = & \  \V({H_{1}(f, x_{i}, x_{j}, y)})\\
    = & \ \int H_{1}(f, x_{i}, x_{j}, y)^{2} f(x_{i}, x_{j}, y)) \,
    dx_{i}
    \, dx_{j} \, dy \\
    & - \left(\int H_{1}(f, x_{i}, x_{j}, y) f(x_{i}, x_{j}, y) \,
      dx_{i} \, dx_{j} \, dy \right)^{2}
  \end{align*}
  which implies Equation \eqref{eq:taylor:Asymp-norm-Tf-ij} and
  \eqref{eq:taylor:Var-Tf-ij}. %
  In order to establish our claim, we will show that
  \begin{equation}
    \label{eq:taylor:def-R-ij}
    R_{ij}^{(n)} = \sqrt{n}\left[\hat{\sigma}_{ij} - T_{ij}(f)-
      Z_{ij}^{(n)} \right]
  \end{equation}
  has a second-order moment converging to 0. %

  Define $\widehat{Z}_{ij}^{(n)}$ as $ Z_{ij}^{(n)}$ with $f$ replaced
  by $\hat{f}$. %
  Let us note that $ R_{ij}^{(n)}=R_{1}+R_{2}$ where
  \begin{align*}
    R_{1} & = \sqrt{n} \left[\hat{\sigma}_{ij} - T_{ij}(f) -
      \widehat{Z}_{ij}^{(n)} \right]\\
    R_{2} & =\sqrt{n} \left[\widehat{Z}_{ij}^{(n)} - Z_{ij}^{(n)}
    \right].
  \end{align*}
  It only remains to state that $\E[{R_{1}^{2}}]$ and
  $\E[{R_{2}^{2}}]$ converges to 0. %
  We can rewrite $R_{1}$ as
  \[
  R_{1} = -\sqrt{n} \left[\widehat{Q} - Q + \Gamma_{n} \right]
  \]
  with
  \begin{gather*}
    Q = \int H_{2}(\hat{f}, x_{i1}, x_{j2}, y) f(x_{i1}, x_{j1}, y)
    f(x_{i2}, x_{j2}, y) \, dx_{i1} \, dx_{j1} \, dx_{i2} \, dx_{j2} \, dy\\
    H_{2}(\hat{f}, x_{i1}, x_{j2}, y) = \frac{1}{\int\hat{f}(x_{i},
      x_{j}, y) \, dx_{i} \, dx_{j}} \left(x_{i1} - m_{i}(\hat{f},
      y)\right) \left(x_{j2} - m_{j}(\hat{f}, y) \right).
  \end{gather*}

  We can estimate $ H_{2}(\hat{f}, x_{i1}, x_{j2}, y) = \eta(x_{i1},
  x_{j2}, y)$ as done in
  Section~\ref{sec:taylor:Estimation-of-quadratic}. %
  Let $\widehat{Q}$ be the estimator of $ Q $. %
  Since $\E[{\Gamma_{n}^{2}}] = o(1/n)$, we only have to control the
  Term, $\sqrt{n}(\widehat{Q} - Q)$, such that $\lim_{n\to\infty} n\,
  \E[{\widehat{Q} - Q}]^{2}=0$ by Lemma \ref{lem:Asymp-n-hatQ-Q} in
  Section \ref{sec:taylor:Technical-Results}. %
  This Lemma implies that $\E[{R_{1}^{2}}]\to0$ as
  $n\rightarrow\infty$. %
  For $R_{2}$ we have
  \begin{multline*}
    \E[{R_{2}^{2}}] = \frac{n}{n_{2}} \left[\int \left(H_{1}(f, x_{i},
        x_{j}, y) - H_{1}(\hat{f}, x_{i}, x_{j}, y)
      \right)^{2}f(x_{i}, x_{j}, y)) \, dx_{i} \, dx_{j} \, dy \right]\\
    - \frac{n}{n_{2}}\left[\int H_{1}(f, x_{i}, x_{j}, y)
      f(x_{i}, x_{j}, y)) \, dx_{i} \, dx_{j} \, dy \right.\\
    \left. -\int H_{1}(\hat{f}, x_{i}, x_{j}, y)^{2} f(x_{i}, x_{j},
      y)) \, dx_{i} \, dx_{j} \, dy \right]{}^{2}.
  \end{multline*}

  The same arguments as the ones of Lemma \ref{lem:Asymp-n-hatQ-Q}
  (Mean Value Theorem and Assumptions \ref{ass:taylor:A2} and
  \ref{ass:taylor:A3}) show that $\E[{R_{2}^{2}}]\to 0$. %
\end{proof}

\begin{proof}[\textbf{Proof of Theorem \ref{thm:taylor:Efficiency-T-ij}} ]
  To prove the inequality we will use the usual framework described
  in~\citet{ibragimov1991asymptotically}. %
  The first step is to calculate the Fr\'echet derivative of
  $T_{ij}(f)$ at some point $f_{0}\in\mathcal{E}$. %
  Assumptions \ref{ass:taylor:A2} and \ref{ass:taylor:A3} and Equation
  \eqref{eq:taylor:T-ij-Taylor}, imply that
  \begin{align*}
    T_{ij}(f) - T_{ij}(f_{0}) & =\int \Bigl(x_{i}m_{j}(f_{0}, y) +x
    _{j}m_{i}(f_{0}, y) - m_{i}(f_{0}, y) m_{j}(f_{0}, y) \Bigr)\\
    & \Bigl(f(x_{i}, x_{j}, y) - f_{0}(x_{i}, x_{j}, y) \Bigr) \,
    dx_{i} \, dx_{j} \, dy + O\left(\int\left(f -
        f_{0}\right)^{2}\right)
  \end{align*}
  where $m_{i}(f_{0}, y) = \int x_{i} f_{0}(x_{i}, x_{j}, y) \, dx_{i}
  \, dx_{j} \, dy/\int f_{0}(x_{i}, x_{j}, y) \, dx_{i} \, dx_{j} \,
  dy$. %
  Therefore, the Fr\'echet derivative of $T_{ij}(f)$ at $f_{0}$ is
  $T_{ij}^{\prime}(f_{0})\cdot h =\left\langle H_{1}(f_{0}, \cdot),
    h\right\rangle $ with
  \begin{equation*}
    H_{1}(f_{0}, x_{i}, x_{j}, y) = x_{i} m_{j}(f_{0}, y) +
    x_{j}m_{i}(f_{0}, y) - m_{i}(f_{0}, y) m_{j}(f_{0}, y).
  \end{equation*}
  Using the results of \citet{ibragimov1991asymptotically}, we denote
  the set of functions in $\mathbb{L}^{2}(\, dx_{i}\, dx_{j}\, dy)$
  orthogonal to $\sqrt{f_{0}}$ as
  \begin{equation*}
    H(f_{0}) = \left\{u\in\mathbb{L}^{2}(\, dx_{i}\,
      dx_{j}\, dy), \int u(x_{i}, x_{j}, y) \sqrt{f_{0}(x_{i}, x_{j},
        y)} \, dx_{i} \, dx_{j} \, dy = 0\right\}.
  \end{equation*}

  Naming $\P_{H(f_{0})}$ as the projection onto $H(f_{0})$,
  $A_{n}(t)=\left(\sqrt{f_{0}}\right)t/\sqrt{n}$ and $P_{f_{0}}^{(n)}$
  is the joint distribution of $\bigl(X_{ik}, X_{jk}\bigr)\ k=1,
  \ldots, n$ under $f_{0}$. %
  Since $\bigl(X_{ik}, X_{jk}\bigr)\ k=1, \ldots, n$ are i.i.d., the
  family, $\bigl\{P_{f_{0}}^{(n)}, f\in\mathcal{E}\bigr\}$, is
  differentiable in a quadratic mean at $f_{0}$ and therefore locally
  asymptotically normal at all points, $f_{0}\in\mathcal{E}$, in the
  $H(f_{0})$ direction with a normalizing factor $A_{n}(f_{0})$ (see
  the details in~\citet{van2000asymptotic}). %
  Then, by the results of \citet{ibragimov1991asymptotically}, we say
  that under these conditions,
  denoting $K_{n} = B_{n}\theta^{\prime}(f_{0})A_{n}\P_{H(f_{0})}$
  with $B_{n} = \sqrt{n}u$, if
  $K_{n}\overset{\mathcal{D}}{\longrightarrow} K$ and if
  $K(u)=\left\langle t, u\right\rangle $, then for every estimator,
  $\hat{\sigma}_{ij}$, of $T_{ij}(f)$ and every family,
  $\mathcal{V}(f_{0})$, of vicinities of $f_{0}$, we have
  \begin{equation*}
    \inf_{\{\mathcal{V}(f_{0})\}} \liminf_{n\to\infty}
    \sup_{f\in\mathcal{V}(f_{0})} n\, \E[{\hat{\sigma}_{ij} -
      T_{ij}(f_{0})}]^{2} \geq \Vert t_{\mathbb{L}^{2}(\, dx_{i}\,
      dx_{j}\, dy)}\Vert^{2}.
  \end{equation*}

  Here,
  \begin{equation*}
    K_{n}(u) = \sqrt{n}T^{\prime}(f_{0}) \cdot
    \frac{\sqrt{f_{0}}}{\sqrt{n}} \P_{H(f_{0})}(u) = T^{\prime}(f_{0})
    \left(\sqrt{f_{0}}\left(u - \sqrt{f_{0}}\int
        u\sqrt{f_{0}}\right)\right),
  \end{equation*}
  since for any $u\in\mathbb{L}^{2}(\, dx_{i}\, dx_{j}\, dy)$ we can
  write it as $u = \sqrt{f_{0}} \left \langle \sqrt{f_{0}},
    u\right\rangle +\P_{H(f_{0})}(u)$. %
  In this case $K_{n}(u)$ does not depend on $n$ and
  \begin{align*}
    K(h) & =T^{\prime}(f_{0}) \cdot \left(\sqrt{f_{0}} \left(u -
        \sqrt{f_{0}} \int h\sqrt{f_{0}} \right)\right)\\
    & = \int H_{1}(f_{0}, \cdot) \sqrt{f_{0}}u - \int
    H_{1}(f_{0}, \cdot) \sqrt{f_{0}} \int u \sqrt{f_{0}}\\
    & = \left\langle t, u \right\rangle
  \end{align*}
  with
  \begin{equation*}
    t(x_{i}, x_{j}, y) = H_{1}(f_{0}, x_{i}, x_{j}, y) \sqrt{f_{0}} -
    \left(\int H_{1}(f_{0}, x_{i}, x_{j}, y)f_{0}\right) \sqrt{f_{0}}.
  \end{equation*}

  The semi parametric Cram\'{e}r-Rao bound for this problem is thus
  \begin{multline*}
    \Vert t_{\mathbb{L}^{2}(\, dx_{i}, \, dx_{j}, \, dy)}\Vert = \int
    H_{1}(f_{0}, x_{i}, x_{j}, y)^{2} f_{0} \, dx_{i} \, dx_{j} \, dy \\
    - \left(\int H_{1}(f_{0}, x_{i}, x_{j}, y) f_{0} \, \, dx_{i} \,
      dx_{j} \, dy \right)^{2}
  \end{multline*}
  and we recognize the expression, $C_{ij}(f_{0})$, found in Theorem
  \ref{thm:taylor:Asymp-norm-Tf-ij}.
\end{proof}

\begin{proof}[\textbf{Proof of Corollary
    \ref{cor:taylor:Asymp-norm-Tf}}]
  \textbf{}This proof is based on the following observation. Employing
  Equation \eqref{eq:taylor:def-R-ij} we have
  \[
  {\boldsymbol{\widehat{T}}}_n - \boldsymbol T(f) = {\boldsymbol
    Z}_n(f) + \frac{{\boldsymbol R}_n}{\sqrt{n}}
  \]
  where ${\boldsymbol Z}_n(f)$ and ${\boldsymbol R}_n$ are matrices
  with elements $ Z_{ij}^{(n)}$ and $ R_{ij}^{(n)}$, defined in
  Equation \eqref{eq:taylor:def-Z-ij} and \eqref{eq:taylor:def-R-ij},
  respectively.

  Hence we have,
  \begin{equation*}
    n\, \E\left[ \left\Vert \vech \left({\boldsymbol{\widehat{T}}}_n -
          \boldsymbol T(f) - {\boldsymbol Z}_n(f)\right)\right
      \Vert^{2} \right] = \E\left[ \left\Vert \vech\left({\boldsymbol
            R}_n\right)\right
      \Vert^{2} \right] = \sum_{i\leq j}\E\Bigl[\Bigl(
    R_{ij}^{(n)}\Bigr)^{2}\Bigr].
  \end{equation*}

  We see by Lemma \ref{lem:Asymp-n-hatQ-Q} that
  $\E[{R_{ij}^{2}}]\rightarrow0$ as $n\rightarrow0$. %
  It follows that
  \begin{equation*}
    n\, \E\left[ \left\Vert{\vech\left({\boldsymbol{\widehat{T}}}_n -
            \boldsymbol T(f) - {\boldsymbol Z}_n(f)\right)}\right
      \Vert^{2}\right]\rightarrow0 \text{ as } n\rightarrow0.
  \end{equation*}

  We know that if $X_{n}$, $X$ and $Y_{n}$ are random variables, then
  if $X_{n}\overset{\mathcal{D}}{\longrightarrow} X$ and
  $\left(X_{n}-Y_{n}\right)\overset{\mathcal{P}}{\longrightarrow}0, $
  it follows that $Y_{n}\overset{\mathcal{D}}{\longrightarrow} X$. %

  Remember also that the convergence in $\mathbb{L}^{2}$ implies
  convergence in the probability, therefore
  \[
  \sqrt{n}\vech \left({\boldsymbol{\widehat{T}}}_n - \boldsymbol T(f)
    - {\boldsymbol Z}_n(f)\right)
  \overset{\mathcal{P}}{\longrightarrow} 0.
  \]
  By the Multivariate Central Limit Theorem we have that
  $\sqrt{n}\vech{\left({\boldsymbol
        Z}_n(f)\right)}\overset{\mathcal{D}}{\longrightarrow}\mathcal{N}(0,
  {\boldsymbol C(f)})$. %
  Therefore, $\sqrt{n}\vech{\left({\boldsymbol{\hat{T}}}_n -
      \boldsymbol
      T(f)\right)}\overset{\mathcal{D}}{\longrightarrow}\mathcal{N}(0,
  {\boldsymbol C(f)})$.
\end{proof}

\begin{proof}[\textbf{Proof of Theorem
    \ref{thm:taylor:quad-estimator}}]
  \textbf{} For abbreviation, we write $M$ instead of $M_{n}$ and set
  $m=\vert{M_{n}}\vert$. %
  We first compute the mean squared error of $\hat{\theta}_{n}$ as
  \begin{equation*}
    \E\bigl[{\hat{\theta}_{n} - \theta}\bigr]^{2} =
    \B^2({\hat{\theta}_{n}})
    +     \V({\hat{\theta}_{n}})
  \end{equation*}
  where $\B({\hat{\theta}_{n}}){} = \E[{\hat{\theta}_{n}}] - \theta$.

  We begin the proof by bounding $\V({\hat{\theta}_{n}})$. %
  Let $A$ and $B$ be $m\times1$ vectors with components
  \begin{align*}
    a_{l} & = \int p_{l}(x_{i}, x_{j}, y) f(x_{i}, x_{j}, y) \, dx_{i}
    \, dx_{j} \, dy \quad l=1, \dots, m, \\
    b_{l} & = \int p_{l}(x_{i1}, x_{j1}, y) f(x_{i2}, x_{j2}, y)
    \psi(x_{i1}, x_{j1}, x_{i2}, x_{j2}, y) \, dx_{i1} \, dx_{j1} \,
    dx_{i2}
    \, dx_{j2} \, dy\\
    & = \int p_{l}(x_{i}, x_{j}, y) g(x_{i}, x_{j}, y) \, dx_{i} \,
    dx_{j} \, dy\quad l=1, \dots, m
  \end{align*}
  where $g(x_{i}, x_{j}, y) = \int f(x_{i2}, x_{j2}, y) \psi(x_{i},
  x_{j}, x_{i2}, x_{j2}, y) \, dx_{i2} \, dx_{j2}$. Let $Q$ and $R$ be
  $m\times1$ vectors of centered functions
  \begin{align*}
    q_{l}(x_{i}, x_{j}, y) & = p_{l}(x_{i}, x_{j}, y)-a_{l}\\
    r_{l}(x_{i}, x_{j}, y) & = \int p_{l}(x_{i2}, x_{j2}, y)
    \psi(x_{i}, x_{j}, x_{i2}, x_{j2}, y) \, dx_{i2} \, dx_{j2} -
    b_{l}
  \end{align*}
  for $l=1, \dots, m$. %
  Let $C$ be an $m\times m$ matrix of constants with indices $l,
  l^{\prime}=1, \dots, m$ defined by
  \[
  c_{ll^{\prime}} = \int p_{l}(x_{i1}, x_{j1}, y)
  p_{l^{\prime}}(x_{i2}, x_{j2}, y) \eta(x_{i1}, x_{j2}, y) \, dx_{i1}
  \, dx_{j1} \, dx_{i2} \, dx_{j2} \, dy.
  \]
  Let us denote $U_{n}$ by the process
  \[
  U_{n}h = \frac{1}{n(n-1)} \sum_{k\neq k^{\prime}=1}^{n} h\bigl(
  X_{ik}, X_{jk}, Y_k, X_{ik^{\prime}} , X_{jk^{\prime}},
  Y_{k^{\prime}}\bigr)
  \]
  and $P_{n}$ by the empirical measure
  \[
  P_{n}h = \frac{1}{n} \sum_{k=1}^{n} h \bigl(X_{ik}, X_{jk},
  Y_k\bigr)
  \]
  for some $h$ in $\mathbb{L}^{2}(\, dx_{i}, \, dx_{j}, \, dy)$. %
  With these notations, $\hat{\theta}_{n}$ has the Hoeffding's
  decomposition
  \begin{align*}
    & \hat{\theta}_{n} \\
    = & \ \frac{1}{n(n-1)} \sum_{l\in M} \sum_{k\neq k^{\prime}=1}^{n}
    \Bigl(q_{l}(X_{ik}, X_{jk}, Y_{k})+a_{l}\Bigr)
    \Bigl(r_{l}(X_{ik^{\prime}}, X_{jk^{\prime}}, Y_{k^{\prime}}) +
    b_{l}\Bigr) \\
    & - \frac{1}{n(n-1)} \sum_{l, l^{\prime}\in M} \sum_{k\neq
      k^{\prime}=1}^{n} \Bigl(q_{l}(X_{ik}, X_{jk}, Y_{k}) +
    a_{l}\Bigr) \Bigl(q_{l^{\prime}}(X_{ik^{\prime}},
    X_{jk^{\prime}}, Y_{k^{\prime}}) + a_{l^{\prime}}\Bigr)c_{ll^{\prime}}\\
    = & \ U_{n}K+P_{n}L + A^{\top}B - A^{\top}CA
  \end{align*}
  where
  \begin{align*}
    K\left(x_{i1}, x_{j1}, y_{1}, x_{i2}, x_{j2}, y_{2}\right) =
    & \  Q^{\top}(x_{i1}, x_{j1}, y_{1}) R(x_{i2}, x_{j2}, y_{2}) \\
    &  - Q^{\top}(x_{i1}, x_{j1}, y_{1}) C Q(x_{i2}, x_{j2}, y_{2})\\
    L(x_{i}, x_{j}, y) = & \ A^{\top} R(x_{i}, x_{j}, y) + B Q(x_{i},
    x_{j}, y) \\
    & \ -2 A^{\top}CQ(x_{i}, x_{j}, y).
  \end{align*}
  Therefore, $\V({\hat{\theta}_{n}}) = \V({U_{n}K}) + \V({P_{n}L}) - 2
  \C({U_{n}K, P_{n}L})$. %
  These three terms are bounded in Lemmas \ref{lem:Var-UK} -
  \ref{lem:Cov-UK-PL}, which gives
  \begin{equation*}
    \V({\hat{\theta}_{n}}) \leq \frac{20}{n(n-1)}
    \Vert{\eta}\Vert_{\infty}^{2}\Vert f\Vert_{\infty}^{2} \Delta^2(m+1)
    + \frac{12}{n} \Vert{\eta}\Vert_{\infty}^{2} \Vert
    f\Vert_{\infty}^{2} \Delta^2.
  \end{equation*}

  For $n$ large enough and a constant $\gamma\in\R$,
  \begin{equation*}
    \V({\hat{\theta}_{n}}) \leq \gamma \Vert{\eta}\Vert_{\infty}^{2}
    \Vert f\Vert_{\infty}^{2} \Delta^2 \left(\frac{m}{n^{2}} +
      \frac{1}{n}\right).
  \end{equation*}

  The term $\B({\hat{\theta}_{n}}){}$ is easily computed, as proven in
  Lemma \ref{lem:Bias-theta-hat-bound}, and is equal to
  \begin{multline*}
    -\int \left(S_{M}f(x_{i1}, x_{j1}, y) - f(x_{i1}, x_{j1},
      y)\right)
    \left(S_{M}f(x_{i2}, x_{j2}, y) - f(x_{i2}, x_{j2}, y)\right)\\
    \eta(x_{i1}, x_{j1}, x_{i2}, x_{j2}, y) \, dx_{i1} \, dx_{j1} \,
    dx_{i2} \, dx_{j2} \, dy.
  \end{multline*}

  From Lemma \ref{lem:Bias-theta-hat-bound}, we bound the bias of
  $\hat{\theta}_{n}$ by
  \begin{equation*}
    \vert{\B({\hat{\theta}_{n}}){}}\vert \le\Delta
    \Vert{\eta}\Vert_{\infty} \sup_{l\notin M} \vert{c_{l}}\vert^{2}.
  \end{equation*}
  The assumption of $\bigl( \sup_{l\notin M} \vert{c_{i}}
  \vert^{2}\bigr)^{2} \approx m/n^{2}$ and since $m/n\to0$, we deduce
  that $\E[{\hat{\theta}_{n}-\theta}]^{2}$ has a parametric rate of
  convergence, $O\left(1/n\right)$.

  Finally to prove Equation \eqref{eq:taylor:Bound-MSE-quad}, note
  that
  \begin{align*}
    n\, \E\bigl[{\hat{\theta}_{n} - \theta}\bigr]^{2} & = n
    \B^2({\hat{\theta}_{n}}) +
    n \V({\hat{\theta}_{n}})\\
    & = n\B^2({\hat{\theta}_{n}}) + n\V({U_{n}K}) + n\V({P_{n}L}).
  \end{align*}
  We previously proved that for some $\lambda_{1}, \lambda_{2}\in\R$
  \begin{align*}
    n\B^2({\hat{\theta}_{n}}) & \leq \lambda_{1}\Delta^2\Vert{\eta}\Vert_{\infty}^{2} \frac{m}{n}\\
    n\V({U_{n}K}) & \leq \lambda_{2}\Delta^2\Vert
    f\Vert_{\infty}^{2}\Vert{\eta}\Vert_{\infty}^{2}\frac{m}{n}.
  \end{align*}
  Thus, Lemma \ref{lem:Asymp-var-PL} implies
  \begin{equation*}
    \Bigl\vert{n\V({P_{n}L}) - \Lambda(f, \eta)}\Bigr\vert \leq
    \lambda\bigl[\Vert{S_{M}f - f}\Vert_{2} + \Vert{S_{M}g -
      g}\Vert_{2}\bigr],
  \end{equation*}
  where $\lambda$ is an increasing function of $\Vert
  f\Vert^{2}_{\infty}$, $\Vert{\eta}\Vert_{\infty}^{2}$ and
  $\Delta$. %
  Based on all of this we find Equation
  \eqref{eq:taylor:Bound-MSE-quad} which ends the proof of Theorem
  \ref{thm:taylor:quad-estimator}.
\end{proof}

\renewcommand{\bibname}{References} %
\bibliography{biblio_cond_cov_taylor} %
\bibliographystyle{apalike}

\subsection{Technical Results}
\label{sec:taylor:Technical-Results}

\begin{lem}[Bias of $\hat{\theta}_{n}$]
  \label{lem:taylor:Bias-theta-hat} %
  The estimator $\hat{\theta}_{n}$ defined in
  \eqref{eq:taylor:estim-non-linear-simp} estimates $\theta$ with bias
  equal to
  \begin{multline*}
    -\int \Bigl(S_{M}f(x_{i1}, x_{j1}, y) - f(x_{i1}, x_{j1}, y)\Bigr)
    \Bigl(S_{M}f(x_{i2}, x_{j2}, y) - f(x_{i2}, x_{j2}, y)\Bigr)\\
    \eta(x_{i1}, x_{j2}, y) \, dx_{i1} \, dx_{j1} \, dx_{i2} \,
    dx_{j2} \, dy.
  \end{multline*}
\end{lem}
\begin{proof}[\textbf{Proof of Lemma \ref{lem:taylor:Bias-theta-hat}}]
  Let $\hat{\theta}_{n}=\hat{\theta}_{n}^{1}-\hat{\theta}_{n}^{2}$
  where
  \begin{align*}
    \hat{\theta}_{n}^{1} = & \ \frac{1}{n(n-1)} \sum_{l\in M}
    \sum_{k\neq k^{\prime}=1} p_{l}(X_{ik}, X_{jk}, Y_{k}) \\
    & \int p_{l}(x_{i}, x_{j}, Y_{k^{\prime}}) \psi(x_{i}, x_{j},
    X_{ik^{\prime}}, X_{jk^{\prime}}, Y_{k^{\prime}}) dx_{i} dx_{j} \\
    \hat{\theta}_{n}^{2} = & \ -\frac{1}{n(n-1)} \sum_{l,
      l^{\prime}\in M} \sum_{k\neq k^{\prime}=1}^{n} p_{l}(X_{ik},
    X_{jk}, Y_{k})
    p_{l^{\prime}} (X_{ik^{\prime}}, X_{jk^{\prime}}, Y_{k^{\prime}}) \\
    & \int p_{l}(x_{i1}, x_{j1}, y) p_{l^{\prime}}(x_{i2}, x_{j2}, y)
    \eta(x_{i1}, x_{j2}, y) \, dx_{i1} \, dx_{j1} \, dx_{i2} \,
    dx_{j2} \, dy.
  \end{align*}
  Let us first compute $\E[{\hat{\theta}_{n}^{1}}]$. %
  \begin{align*}
    \E[{\hat{\theta}_{n}^{1}}] = & \ \sum_{l\in M} \int p_{l}(x_{i1},
    x_{j1}, y) f(x_{i1}, x_{j1}, y) \, dx_{i1} \,
    dx_{j1} \, dy \\
    & \int p_{l}(x_{i1}, x_{j1}, y) \psi(x_{i1}, x_{j1}, x_{i2},
    x_{j2}, y) \\
    & \qquad f(x_{i2}, x_{j2}, y) \, dx_{i1}
    \, dx_{j1} \, dx_{i2} \, dx_{j2} \, dy \\
    = & \ \sum_{l\in M} a_{l}\int p_{l}(x_{i1}, x_{j1}, y)
    \psi(x_{i1}, x_{j1}, x_{i2}, x_{j2}, y) \\
    & \qquad f(x_{i2}, x_{j2}, y) \, dx_{i1}
    \, dx_{j1} \, dx_{i2} \, dx_{j2} \, dy \\
    = & \ \int \left(\sum_{l\in M} a_{l} p_{l}(x_{i2}, x_{j2}, y)
    \right) \psi(x_{i1}, x_{j1}, x_{i2}, x_{j2}, y) \\
    & \qquad f(x_{i2}, x_{j2}, y) \, dx_{i1}
    \, dx_{j1} \, dx_{i2} \, dx_{j2} \, dy \\
    = & \ \int S_{M}f(x_{i1}, x_{j1}, y) f(x_{i2}, x_{j2}, y) \\
    & \qquad\eta(x_{i1}, x_{j2}, y) \, dx_{i1}\, dx_{j1} \, dx_{i2}\,
    dx_{j2}\, dy \\
    & + \int S_{M}f(x_{i2}, x_{j2}, y) f(x_{i1}, x_{j1}, y) \\
    & \qquad \eta(x_{i1}, x_{j2}, y) \, dx_{i1}\, dx_{j1}\, dx_{i2}\,
    dx_{j2}\, dy
  \end{align*}
  Now for $\hat{\theta}_{n}^{2}$, we get
  \begin{align*}
    \E[{\hat{\theta}_{n}^{2}}] = & \ \sum_{l, l^{\prime}\in M} \int
    p_{l}(x_{i}, x_{j}, y)
    f(x_{i}, x_{j}, y) \, dx_{i} \, dx_{j} \, dy \\
    & \int p_{l^{\prime}}(x_{i}, x_{j}, y) f(x_{i}, x_{j}, y) \,
    dx_{i}\, dx_{j}\, dy \\
    & \int p_{l}(x_{i1}, x_{j1}, y) p_{l^{\prime}}(x_{i2}, x_{j2}, y)
    \\
    & \qquad \eta(x_{i1}, x_{j2}, y) \, dx_{i1}\, dx_{j1}\, dx_{i2}\,
    dx_{j2}\,
    dy \\
    = & \ \sum_{l, l^{\prime}\in M} a_{l} a_{l^{\prime}} \int
    p_{l}(x_{i1}, x_{j1}, y) p_{l^{\prime}}(x_{i2}, x_{j2}, y) \\
    & \qquad \eta(x_{i1}, x_{j2}, y) \, dx_{i1}\, dx_{j1}\, dx_{i2}\,
    dx_{j2}\, dy \\
    = & \int \left( \sum_{l\in M} a_{l} p_{l}(x_{i1}, x_{j1}, y)
    \right) \left(\sum_{l^{\prime}\in M} a_{l^{\prime}}
      p_{l^{\prime}}(x_{i2}, x_{j2}, y) \right) \\
    & \qquad \eta(x_{i1}, x_{j2}, y) \, dx_{i1}\, dx_{j1}\, dx_{i2}\,
    dx_{j2}\, dy \\
    = & \ \int S_{M}f(x_{i1}, x_{j1}, y) S_{M}f(x_{i2}, x_{j2}, y) \\
    & \qquad \eta(x_{i1}, x_{j2}, y) \, dx_{i1}\, dx_{j1}\, dx_{i2}\,
    dx_{j2}\, dy.
  \end{align*}
  Arranging these terms and using
  \[
  \B({\hat{\theta}_{n}}){} = \E[{\hat{\theta}_{n}}] - \theta =
  \E[{\hat{\theta}_{n}^{1}}] - \E[{\hat{\theta}_{n}^{2}}] - \theta
  \]
  we obtain the desire bias. %
\end{proof}

\begin{lem}[Bound of $\V({U_{n}K})$]
  \label{lem:Var-UK}
  Under the assumptions of Theorem \ref{thm:taylor:quad-estimator}, we
  have
  \[
  \V({U_{n}K}) \leq
  \frac{20}{n(n-1)}\Vert{\eta}\Vert_{\infty}^{2}\Vert
  f\Vert_{\infty}^{2}\Delta^2(m+1)
  \]
\end{lem}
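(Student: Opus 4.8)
The plan is to exploit that $U_{n}K$ is a second-order $U$-statistic whose kernel $K$ is \emph{completely degenerate}. Writing $Z_{k}=(\X Xik,\X Xjk,\X Y{}k)$, recall that $Q$ and $R$ are centered, i.e. $\E{Q(Z)}=0$ and $\E{R(Z)}=0$. Consequently both one-argument projections of $K$ vanish: conditioning on the first argument gives $\E{K(z_{1},Z_{2})\mid Z_{1}=z_{1}}=Q^{\top}(z_{1})\E{R(Z_{2})}-Q^{\top}(z_{1})C\,\E{Q(Z_{2})}=0$, and symmetrically for the second argument. In particular $\E{U_{n}K}=\E{K(Z_{1},Z_{2})}=0$, so $\V{U_{n}K}=\E{(U_{n}K)^{2}}$.

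First I would expand $\E{(U_{n}K)^{2}}$ over the ordered index quadruples. Degeneracy annihilates every term in which the two pairs of indices share at most one element: if all four are distinct the factors are independent with mean zero, and if exactly one index is common, conditioning on that shared variable factorizes the expectation into a product of vanishing projections. Hence only the $n(n-1)$ diagonal configurations $\{k,k'\}=\{m,m'\}$ survive, giving the exact identity
\[
\V{U_{n}K}=\frac{1}{n(n-1)}\Bigl[\E{K(Z_{1},Z_{2})^{2}}+\E{K(Z_{1},Z_{2})K(Z_{2},Z_{1})}\Bigr]\le\frac{2}{n(n-1)}\E{K(Z_{1},Z_{2})^{2}},
\]
the last step by Cauchy--Schwarz together with $\E{K(Z_{2},Z_{1})^{2}}=\E{K(Z_{1},Z_{2})^{2}}$ (the $Z_{k}$ being i.i.d.). It then remains to prove $\E{K(Z_{1},Z_{2})^{2}}\le 10\,m\,\norm{\eta}_{\infty}^{2}\norm f_{\infty}^{2}\Dx 2$, which via $(a+b)^{2}\le2a^{2}+2b^{2}$ reduces to controlling $\E{(Q^{\top}(Z_{1})R(Z_{2}))^{2}}$ and $\E{(Q^{\top}(Z_{1})CQ(Z_{2}))^{2}}$.

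The key device is a spectral bound. Setting $\Sigma_{Q}=\E{Q(Z)Q(Z)^{\top}}$ and $\Sigma_{R}=\E{R(Z)R(Z)^{\top}}$, independence of $Z_{1},Z_{2}$ gives $\E{(Q^{\top}(Z_{1})R(Z_{2}))^{2}}=\operatorname{tr}(\Sigma_{Q}\Sigma_{R})$ and $\E{(Q^{\top}(Z_{1})CQ(Z_{2}))^{2}}=\operatorname{tr}(C\Sigma_{Q}C^{\top}\Sigma_{Q})$. For any unit vector $v$, writing $\phi=\sum_{l}v_{l}p_{l}$ one has $v^{\top}\Sigma_{Q}v=\V{\phi(Z)}\le\int\phi^{2}f\le\norm f_{\infty}$ by orthonormality of $(p_{l})$, so $\norm{\Sigma_{Q}}_{\mathrm{op}}\le\norm f_{\infty}$. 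Since $\Sigma_{Q},\Sigma_{R}\succeq0$, this yields $\operatorname{tr}(\Sigma_{Q}\Sigma_{R})\le\norm f_{\infty}\operatorname{tr}(\Sigma_{R})$ and $\operatorname{tr}(C\Sigma_{Q}C^{\top}\Sigma_{Q})\le\norm f_{\infty}^{2}\operatorname{tr}(C^{\top}C)$, reducing everything to two scalar sums.

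Finally I would estimate those sums directly. Using $\abs{\psi}\le2\norm{\eta}_{\infty}$, Cauchy--Schwarz in the integrated variables, and the factorization $p_{l}=\alpha_{l_{\alpha}}\beta_{l_{\beta}}$ gives $\E{r_{l}(Z)^{2}}\le4\norm f_{\infty}\norm{\eta}_{\infty}^{2}\Dx 2$, hence $\operatorname{tr}(\Sigma_{R})=\sum_{l}\E{r_{l}(Z)^{2}}\le4m\norm f_{\infty}\norm{\eta}_{\infty}^{2}\Dx 2$. For $C$, fixing $l$ and applying Bessel's inequality to the coefficients $c_{ll'}=\langle p_{l'},G_{l}\rangle$ with $G_{l}(x_{i2},x_{j2},y)=\int p_{l}(x_{i1},x_{j1},y)\eta(x_{i1},x_{j2},y)dx_{i1}dx_{j1}$ gives $\sum_{l'}c_{ll'}^{2}\le\norm{G_{l}}_{2}^{2}\le\norm{\eta}_{\infty}^{2}\Dx 2$, so $\operatorname{tr}(C^{\top}C)=\sum_{l,l'}c_{ll'}^{2}\le m\norm{\eta}_{\infty}^{2}\Dx 2$. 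Collecting the pieces yields $\E{K(Z_{1},Z_{2})^{2}}\le(8+2)m\norm f_{\infty}^{2}\norm{\eta}_{\infty}^{2}\Dx 2$ and therefore $\V{U_{n}K}\le\frac{20m}{n(n-1)}\norm{\eta}_{\infty}^{2}\norm f_{\infty}^{2}\Dx 2\le\frac{20}{n(n-1)}\norm{\eta}_{\infty}^{2}\norm f_{\infty}^{2}\Dx 2(m+1)$. The main obstacle is the combinatorial degeneracy step: one must carefully check that every partially overlapping index configuration vanishes, so that the variance is genuinely of order $m/n^{2}$ rather than $1/n$; the spectral bound $\norm{\Sigma_{Q}}_{\mathrm{op}}\le\norm f_{\infty}$ is then what keeps the surviving term \emph{linear} in $m$ instead of quadratic.
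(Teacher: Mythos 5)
Your proof is correct, and its handling of the two key moments is genuinely different from the paper's, even though the skeleton coincides: like the paper, you use that $Q$ and $R$ are centered to make $K$ completely degenerate, reduce to the exact diagonal identity $\V{U_{n}K}=\frac{1}{n(n-1)}\bigl[\E{K(Z_{1},Z_{2})^{2}}+\E{K(Z_{1},Z_{2})K(Z_{2},Z_{1})}\bigr]$, apply Cauchy--Schwarz, and split $\E{K^{2}}$ into the $Q^{\top}R$ and $Q^{\top}CQ$ contributions. Where the paper then expands $\E{(Q^{\top}R)^{2}}=W_{1}-W_{2}-W_{3}+W_{4}$ and $\E{(Q^{\top}CQ)^{2}}=W_{5}-2W_{6}+W_{7}$, discards the nonnegative terms $W_{2},W_{3},W_{6}$, and bounds $W_{1}$, $W_{5}$, $\sum_{l,l^{\prime}}c_{ll^{\prime}}^{2}$, $W_{4}$, $W_{7}$ by hand using orthonormality and the tensor structure $p_{l}=\alpha_{l_{\alpha}}\beta_{l_{\beta}}$, you instead use the exact trace identities $\E{(Q^{\top}(Z_{1})R(Z_{2}))^{2}}=\operatorname{tr}(\Sigma_{Q}\Sigma_{R})$ and $\E{(Q^{\top}(Z_{1})CQ(Z_{2}))^{2}}=\operatorname{tr}(C\Sigma_{Q}C^{\top}\Sigma_{Q})$, the spectral bound $\norm{\Sigma_{Q}}_{\mathrm{op}}\leq\norm f_{\infty}$ (valid: $v^{\top}\Sigma_{Q}v=\V{\phi(Z)}\leq\int\phi^{2}f\leq\norm f_{\infty}$ for $\phi=\sum_{l}v_{l}p_{l}$ with $\norm{\phi}_{2}=1$), and Bessel's inequality $\sum_{l^{\prime}}c_{ll^{\prime}}^{2}\leq\norm{G_{l}}_{2}^{2}\leq\norm{\eta}_{\infty}^{2}\Dx 2$ for $\operatorname{tr}(C^{\top}C)$; all of these steps check out, including $\operatorname{tr}(\Sigma_{R})\leq4m\norm f_{\infty}\norm{\eta}_{\infty}^{2}\Dx 2$ (where the factorization step could even be replaced by the marginal bound $f_{Y}\leq\norm f_{\infty}\Dx{}$ together with $\norm{p_{l}}_{2}=1$) and the PSD trace inequalities $\operatorname{tr}(AB)\leq\norm A_{\mathrm{op}}\operatorname{tr}(B)$. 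Your route buys two things: the Bessel argument for $\operatorname{tr}(C^{\top}C)$ needs only orthonormality of $(p_{l})$, not the product form of the basis on which the paper's bound for $\sum_{l,l^{\prime}}c_{ll^{\prime}}^{2}$ relies; and since $Q,R$ are centered from the outset, the uncentered terms $W_{4}=(A^{\top}B)^{2}$ and $W_{7}$ never arise, which is exactly why you get the slightly sharper $20m$ instead of the paper's $20(m+1)$ before your final relaxation $m\leq m+1$ recovers the stated bound. What the paper's longer expansion buys in exchange is a fully elementary computation with each term displayed explicitly; both arguments arrive at the same constant, via $2\times2\times(4+1)=20$.
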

\begin{proof}[\textbf{Proof of Lemma \ref{lem:Var-UK}}]
  Note that $U_{n}K$ is centered because $Q$ and $R$ are centered and
  $(X_{ik}, X_{jk}, Y_{k})$, $k=1, \ldots, n$ is an independent
  sample. %
  So $\V({U_{n}K})$ is equal to
  \begin{align*}
    \E[{U_{n}K}]^{2} = & \ \mathbb{E}
    \Biggl(\frac{1}{\left(n(n-1)\right)^{2}} \sum_{k_{1}\neq
      k_{1}^{\prime}=1}^{n} \sum_{k_{2}\neq k_{2}^{\prime}=1}^{n} K
    \bigl(X_{ik_{1}}, X_{jk_{1}}, Y_{k_{1}}, X_{ik_{1}^{\prime}},
    X_{jk_{1}^{\prime}}, Y_{k_{1}^{\prime}}
    \bigr) \\
    & \qquad K\bigl(X_{ik_{2}}, X_{jk_{2}}, Y_{k_{2}},
    X_{ik_{2}^{\prime}}, X_{jk_{2}^{\prime}},
    Y_{k_{2}^{\prime}}\bigr) \Biggr) \\
    = & \frac{1}{n(n-1)} \mathbb{E} \Biggl(K^{2} \bigl(X_{i1}, X_{j1},
    Y_{1}, X_{i2}, X_{j2}, Y_{2}\bigr) \\
    & \qquad + K \bigl(X_{i1}, X_{j1}, Y_{1}, X_{i2}, X_{j2},
    Y_{2}\bigr) K \bigl(X_{i2}, X_{j2}, Y_{2}, X_{i1}, X_{j1},
    Y_{1}\bigr)\Biggr)
  \end{align*}
  By the Cauchy-Schwarz inequality, we get
  \[
  \V({U_{n}K}) \leq \frac{2}{n(n-1)} \E\bigl[{K^{2} \left(X_{i1},
      X_{j1}, Y_{1}, X_{i2}, X_{j2}, Y_{2}\right)}\bigr].
  \]
  Moreover, using the fact that $2\vert{\E[{XY}]}\vert
  \leq\E[{X^{2}}]+\E[{Y^{2}}]$, we obtain
  \begin{align*}
    & \hspace{-3em} \E[{K^{2}\left(X_{i1}, X_{j1},
        Y_{1}, X_{i2}, X_{j2}, Y_{2}\right)}] \\
    \leq & \ 2\Biggl[ \E[{\bigl(Q^{\top}(X_{i1}, X_{j1}, Y_{1})
      R(X_{i2}, X_{j2}, Y_{2})\bigr)^{2}}] \\
    & \qquad + \E[{\bigl( Q^{\top}(X_{i1}, X_{j1}, Y_{1}) C Q(X_{i2},
      X_{j2}, Y_{2})\bigr)^{2}}] \Biggr].
  \end{align*}
  We will bound these two terms. The first one is
  \begin{align*}
    & \E\Bigl[{\Bigl( Q^{\top}(X_{i1}, X_{j1}, Y_{1})
      R(X_{i2}, X_{j2}, Y_{2})\Bigr)^{2}}\Bigr] \\
    = & \ \sum_{l, l^{\prime}\in M} \left(\int p_{l}(x_{i}, x_{j}, y)
      p_{l^{\prime}}(x_{i}, x_{j}, y) f(x_{i}, x_{j}, y) \, dx_{i}\,
      dx_{j}\, dy -
      a_{l} a_{l^{\prime}} \right) \\
    & \biggl(\int p_{l}(x_{i2}, x_{j2}, y) p_{l^{\prime}}(x_{i3},
    x_{j3}, y) \psi(x_{i1}, x_{j1}, x_{i2},
    x_{j2}, y) \\
    & \psi(x_{i1}, x_{j1}, x_{i3}, x_{j3}, y) f(x_{i1}, x_{j1}, y) \,
    dx_{i1} \, dx_{j1} \, dx_{i2} \, dx_{j2} \, dx_{i3} \, dx_{j3} \,
    dy -  b_{l} b_{l^{\prime}} \biggr) \\
    = & \ W_{1} - W_{2} - W_{3} + W_{4}
  \end{align*}
  where
  \begin{align*}
    W_{1} & = \int \sum_{l, l^{\prime}\in M} p_{l}(x_{i1}, x_{j1}, y)
    p_{l^{\prime}}(x_{i1}, x_{j1}, y) p_{l}(x_{i2}, x_{j2},
    y^{\prime}) p_{l^{\prime}}(x_{i3}, x_{j3}, y^{\prime}) \\
    &\qquad \psi(x_{i4}, x_{j4}, x_{i2}, x_{j2}, y^{\prime})
    \psi(x_{i4}, x_{j4}, x_{i3}, x_{j3}, y^{\prime}) \\
    & \qquad f(x_{i1}, x_{j1}, y) f(x_{i4}, x_{j4}, y^{\prime}) \,
    dx_{i1} \, dx_{j1} \, dx_{i2} \, dx_{j2} \, dx_{i3} \, dx_{j3} \,
    dx_{i4} \,
    dx_{j4} \, dy \, dy^{\prime} \\
    W_{2} & = \int \sum_{l, l^{\prime}\in M} b_{l} b_{l^{\prime}}
    p_{l}(x_{i1}, x_{j1}, y) p_{l^{\prime}}(x_{i1}, x_{j1}, y)
    f(x_{i1}, x_{j1}, y) \, dx_{i1} \, dx_{j1} dy \\
    W_{3} & = \int \sum_{l, l^{\prime}\in M} a_{l} a_{l^{\prime}}
    p_{l}(x_{i2}, x_{j2}, y^{\prime}) p_{l^{\prime}}(x_{i3}, x_{j3},
    y^{\prime}) \\
    & \qquad \psi(x_{i4}, x_{j4}, x_{i2}, x_{j2}, y^{\prime})
    \psi(x_{i4},
    x_{j4}, x_{i3}, x_{j3}, y^{\prime}) \\
    & \qquad f(x_{i4}, x_{j4}, y^{\prime}) \, dx_{i2} \, dx_{j2} \,
    dx_{i3} \, dx_{j3}
    \, dx_{i4} \, dx_{j4} \, dy^{\prime} \\
    W_{4} & = \sum_{l, l^{\prime}\in M} a_{l} a_{l^{\prime}} b_{l}
    b_{l^{\prime}}.
  \end{align*}

  $W_{2}$ and $W_{3}$ are positive, hence
  \begin{equation*}
    \E\Bigl[{\Bigl( 2 Q^{\top}(X_{i1}, X_{j1}, Y_{1}) R(X_{i2}, X_{j2},
      Y_{2}) \Bigl)^{2}}\Bigl] \leq W_{1} + W_{4}.
  \end{equation*}
  \begin{align*}
    W_{1} = & \ \int \sum_{l, l^{\prime}\in M} p_{l}(x_{i1}, x_{j1},
    y) p_{l^{\prime}}(x_{i1}, x_{j1}, y) \\
    & \qquad \left( \int p_{l}(x_{i2}, x_{j2}, y^{\prime})
      \psi(x_{i4}, x_{j4}, x_{i2}, x_{j2}, y^{\prime}) \, dx_{i2} \,
      dx_{j2} \right) \\
    & \qquad \left( \int p_{l^{\prime}}(x_{i3}, x_{j3}, y^{\prime})
      \psi(x_{i4}, x_{j4}, x_{i3}, x_{j3}, y^{\prime}) \, dx_{i3}\,
      dx_{j3} \right) \\
    & \qquad f(x_{i1}, x_{j1}, y) f(x_{i4}, x_{j4}, y^{\prime}) \,
    dx_{i1} \, dx_{j1}
    \, dx_{i4} \, dx_{j4} \, dy \, dy^{\prime}\\
    \leq & \ \Vert f\Vert_{\infty}^{2} \sum_{l, l^{\prime}\in M} \int
    p_{l}(x_{i1}, x_{j1}, y) p_{l^{\prime}}(x_{i1}, x_{j1}, y) \,
    dx_{i1} \, dx_{j1} dy\\
    & \qquad \int \left(\int p_{l}(x_{i2}, x_{j2}, y^{\prime})
      \psi(x_{i4}, x_{j4}, x_{i2}, x_{j2}, y^{\prime}) \, dx_{i2} \,
      dx_{j2} \right) \\
    & \qquad \left( \int p_{l^{\prime}}(x_{i3}, x_{j3}, y^{\prime})
      \psi(x_{i4}, x_{j4}, x_{i3}, x_{j3}, y^{\prime}) \, dx_{i3}\,
      dx_{j3}\right) \\
    & \hspace{15em} \, dx_{i2} \, dx_{j2} \, dx_{i4}
    \, dx_{j4} \, dy^{\prime} \\
  \end{align*}
    
  Since $p_{l}$'s are orhonormal we have
  \begin{multline*}
    W_{1}\leq \Vert f\Vert_{\infty}^{2}\sum_{l\in M} \int \biggl(
    \int  p_{l}(x_{i2}, x_{j2}, y^{\prime}) \\
    \psi(x_{i4}, x_{j4}, x_{i2}, x_{j2}, y^{\prime}) \, dx_{i2} \,
    dx_{j2} \biggr)^{2} \, dx_{i4} \, dx_{j4} \, dy^{\prime}.
  \end{multline*}

  Moreover by the Cauchy-Schwarz inequality and
  $\Vert{\psi}\Vert_{\infty} \leq 2 \Vert{\eta}\Vert_{\infty}$
  \begin{align*}
    & \hspace{-3em} \left(\int p_{l}(x_{i2}, x_{j2}, y^{\prime})
      \psi(x_{i4}, x_{j4}, x_{i2}, x_{j2}, y^{\prime}) \, dx_{i2} \,
      dx_{j2}
    \right)^{2} \\
    & \leq \int p_{l}(x_{i2}, x_{j2}, y^{\prime})^{2} \, dx_{i2} \,
    dx_{j2} \int \psi(x_{i4}, x_{j4}, x_{i2}, x_{j2}, y^{\prime})^{2}
    \,
    dx_{i2} \, dx_{j2} \\
    & \leq \Vert{\psi}\Vert_{\infty}^{2} \Delta\int
    p_{l}(x_{i2}, x_{j2}, y^{\prime})^{2} \, dx_{i2} \, dx_{j2}\\
    & \leq 4\Vert{\eta}\Vert_{\infty}^{2}\Delta \int p_{l}(x_{i2},
    x_{j2}, y^{\prime})^{2}\, dx_{i2}\, dx_{j2},
  \end{align*}
  and then
  \begin{align*}
    & \int\Bigl(\int p_{l}(x_{i2}, x_{j2}, y^{\prime}) \psi(x_{i4},
    x_{j4}, x_{i2}, x_{j2}, y^{\prime}) \, dx_{i2}
    \, dx_{j2}\Bigr)^{2} \, dx_{i4} \, dx_{j4} \, dy^{\prime}\\
    & \leq4\Vert{\eta}\Vert_{\infty}^{2} \Delta^2\int
    p_{l}(x_{i2}, x_{j2}, y^{\prime})^{2} \, dx_{i2} \, dx_{j2} \, dy^{\prime}\\
    & = 4\Vert{\eta}\Vert_{\infty}^{2}\Delta^2.
  \end{align*}

  Finally,
  \begin{equation*}
    W_{1} \leq 4 \Vert{\eta}\Vert_{\infty}^{2} \Vert
    f\Vert_{\infty}^{2}\Delta^2m.
  \end{equation*}

  For the term $W_{4}$ using the facts that $S_{M}f$ and $S_{M}g$ are
  projection and that $\int f=1$, we have
  \[
  W_{4} = \left(\sum_{l\in M}a_{l}b_{l}\right)^{2}\leq\sum_{l\in
    M}a_{l}^{2} \sum_{l\in M}b_{l}^{2} \leq\Vert f\Vert_{2}^{2}\Vert
  g\Vert_{2}^{2}\leq\Vert f\Vert_{\infty}\Vert g\Vert_{2}^{2}.
  \]
  By the Cauchy-Schwartz inequality we have $\Vert
  g\Vert_{2}^{2}\leq4\Vert{\eta}\Vert_{\infty}^{2}\Vert
  f\Vert_{\infty}\Delta^2$ and then
  \[
  W_{4}\leq 4 \Vert{\eta}\Vert_{\infty}^{2} \Vert
  f\Vert_{\infty}^{2}\Delta^2
  \]
  which leads to
  \begin{equation}
    \label{eq:taylor:Var-UK-1term-2QtR}
    \E\Bigl[{ \Bigl( Q^{\top}(X_{i1}, X_{j1}, Y_{1}) R(X_{i2},
      X_{j2}, Y_{2}) \Bigr)^{2}}\Bigl] \leq 4 \Vert{\eta}\Vert_{\infty}^{2}
    \Vert f\Vert_{\infty}^{2} \Delta^2(m+1).
  \end{equation}

  The second term is
  \begin{equation*}
    \E\bigl[\bigl( Q^{\top}(X_{i1}, X_{j1}, Y_{1} C
    Q(X_{i2}, X_{j2}, Y_{2}) \bigr)\bigl] = W_{5} - 2 W_{6} + W_{7}
  \end{equation*}
  where
  \begin{align*}
    W_{5} = & \ \int \sum_{l_{1}, l_{1}^{\prime}} \sum_{l_{2},
      l_{2}^{\prime}} c_{l_{1}l_{1}^{\prime}} c_{l_{2}l_{2}^{\prime}}
    p_{l_{1}}(x_{i1}, x_{j1}, y) p_{l_{2}}(x_{i1}, x_{j1}, y) \\
    & \qquad p_{l_{1}^{\prime}}(x_{i2}, x_{j2}, y^{\prime})
    p_{l_{2}^{\prime}}(x_{i2}, x_{j2}, y^{\prime}) \\
    & \qquad f(x_{i1}, x_{j1}, y) f(x_{i2}, x_{j2}, y^{\prime}) \,
    dx_{i1} \, dx_{j1}
    \, dx_{i2} \, dx_{j2} \, dy^{\prime}\, dy \\
    W_{6} = &\ \int\sum_{l_{1}, l_{1}^{\prime}} \sum_{l_{2},
      l_{2}^{\prime}} c_{l_{1}l_{1}^{\prime}} c_{l_{2}l_{2}^{\prime}}
    a_{l_{1}} a_{l_{2}} p_{l_{1}^{\prime}}(x_{i}, x_{j}, y) \\
    & \qquad p_{l_{2}^{\prime}}(x_{i}, x_{j}, y) \, dx_{i} \, dx_{j}
    \, dy \\
    W_{7} = &\ \sum_{l_{1}, l_{1}^{\prime}} \sum_{l_{2},
      l_{2}^{\prime}} c_{l_{1}l_{1}^{\prime}} c_{l_{2}l_{2}^{\prime}}
    a_{l_{1}} a_{l_{1}^{\prime}} a_{l_{2}} a_{l_{2}^{\prime}}.
  \end{align*}
  Using the previous manipulation, we show that $W_{6}\geq0$. Thus
  \begin{equation*}
    \E\bigl[\bigl( Q^{\top}(X_{i1}, X_{j1}, Y_{1}) C Q(X_{i2}, X_{j2},
    Y_{2}) \bigr)\bigl] \leq W_{5} + W_{7}.
  \end{equation*}

  First, observe that
  \begin{align*}
    W_{5} = &\ \sum_{l_{1}, l_{1}^{\prime}} \sum_{l_{2},
      l_{2}^{\prime}} c_{l_{1}l_{1}^{\prime}} c_{l_{2}l_{2}^{\prime}} \\
    & \qquad \left( \int p_{l_{1}}(x_{i1}, x_{j1}, y)
      p_{l_{2}}(x_{i1}, x_{j1}, y) f(x_{i1}, x_{j1}, y) \, dx_{i1}\,
      dx_{j1}\, dy\right) \\
    & \qquad \left( \int p_{l_{1}^{\prime}}(x_{i2}, x_{j2},
      y^{\prime}) p_{l_{2}^{\prime}}(x_{i2}, x_{j2}, y^{\prime})
      f(x_{i2}, x_{j2}, y^{\prime}) \, dx_{i2}\, dx_{j2}\,
      dy^{\prime}\right) \\
    \leq &\ \Vert f\Vert_{\infty}^{2}\sum_{l_{1}, l_{1}^{\prime}}
    \sum_{l_{2}, l_{2}^{\prime}} c_{l_{1}l_{1}^{\prime}}
    c_{l_{2}l_{2}^{\prime}} \left( \int p_{l_{1}}(x_{i1}, x_{j1}, y)
      p_{l_{2}}(x_{i1}, x_{j1}, y) \, dx_{i1} \, dx_{j1} \, dy\right) \\
    & \qquad \left(\int p_{l_{1}^{\prime}}(x_{i2}, x_{j2}, y^{\prime})
      p_{l_{2}^{\prime}}(x_{i2}, x_{j2}, y^{\prime})
      \, dx_{i2}\, dx_{j2}\, dy^{\prime}\right) \\
    = &\ \Vert f\Vert_{\infty}^{2} \sum_{l, l^{\prime}}
    c_{ll^{\prime}}^{2}
  \end{align*}
  again using the orthonormality of the $p_{l}$'s. %

Therefore, given the decomposition $p_{l}(x_{i}, x_{j}, y) =
  \alpha_{l_{\alpha}}(x_{i}, x_{j}) \beta_{l_{\beta}}(y)$,
  \begin{align*}
    \sum_{l, l^{\prime}}c_{ll^{\prime}}^{2} = & \int \sum_{l_{\beta},
      l_{\beta}^{\prime}}
    \beta_{l_{\beta}}(y)\beta_{l_{\beta}^{\prime}}(y)
    \beta_{l_{\beta}}(y^{\prime})
    \beta_{l_{\beta}^{\prime}}(y^{\prime}) \\
    & \sum_{l_{\alpha}, l_{\alpha}^{\prime}}
    \left(\int\alpha_{l_{\alpha}}(x_{i1}, x_{j1})
      \alpha_{l_{\alpha}^{\prime}}(x_{i2}, x_{j2}) \eta(x_{i1},
      x_{j2}, y) \, dx_{i1} \, dx_{j1} \, dx_{i2} \,
      dx_{j2} \right) \\
    & \left( \int \alpha_{l_{\alpha}}(x_{i3}, x_{j3})
      \alpha_{l_{\alpha}^{\prime}}(x_{i4}, x_{j4}) \eta(x_{i3},
      x_{j4}, y^{\prime}) \, dx_{i3} \, dx_{j3} \, dx_{i4}\, dx_{j4}
    \right) dy dy^{\prime}
  \end{align*}
  But
  \begin{align*}
    & \sum_{l_{\alpha}, l_{\alpha}^{\prime}}
    \left(\int\alpha_{l_{\alpha}}(x_{i1}, x_{j1})
      \alpha_{l_{\alpha}^{\prime}}(x_{i2}, x_{j2}) \eta(x_{i1},
      x_{j2}, y) \, dx_{i1} \, dx_{j1} \, dx_{i2}\,
      dx_{j2}\right) \\
    &\qquad \left( \int \alpha_{l_{\alpha}}(x_{i3}, x_{j3})
      \alpha_{l_{\alpha}^{\prime}}(x_{i4}, x_{j4}) \eta(x_{i3},
      x_{j4}, y^{\prime}) \, dx_{i3}\, dx_{j3}\,
      dx_{i4}\, dx_{j4}\right) \\
    = & \ \sum_{l_{\alpha}, l_{\alpha}^{\prime}} \int
    \alpha_{l_{\alpha}}(x_{i1}, x_{j1})
    \alpha_{l_{\alpha}^{\prime}}(x_{i2}, x_{j2}) \eta(x_{i1}, x_{j2},
    y)
    \alpha_{l_{\alpha}}(x_{i3}, x_{j3}) \\
    & \qquad \alpha_{l_{\alpha}^{\prime}}(x_{i4}, x_{j4}) \eta(x_{i3},
    x_{j4}, y^{\prime}) \, dx_{i1} \, dx_{j1} \, dx_{i2} \, dx_{j2}
    \, dx_{i3} \, dx_{j3} \, dx_{i4} \, dx_{j4} \\
    = & \ \int \sum_{l_{\alpha}} \left(\int\alpha_{l_{\alpha}}(x_{i1},
      x_{j1}) \eta(x_{i1}, x_{j2}, y) \, dx_{i1} \, dx_{j1} \right)
    \alpha_{l_{\alpha}}(x_{i3},
    x_{j3}) \\
    & \qquad \sum_{l_{\alpha}^{\prime}} \left( \int
      \alpha_{l_{\alpha}^{\prime}}(x_{i4}, x_{j4}) \eta(x_{i3},
      x_{j4}, y^{\prime}) \, dx_{i4} \, dx_{j4}\right) \\
    & \qquad \alpha_{l_{\alpha}^{\prime}}(x_{i2}, x_{j2}) \, dx_{i2}
    \,
    dx_{j2} \, dx_{i3} \, dx_{j3} \\
    \leq & \ \int \eta(x_{i3}, x_{j3}, x_{i2}, x_{j2}, y) \eta(x_{i3},
    x_{j2}, y^{\prime}) \, dx_{i2}\, dx_{j2}\, dx_{i3}\,
    dx_{j3} \\
    \leq & = \Delta^2\Vert{\eta}\Vert_{\infty}^{2}
  \end{align*}
  using the orthonormality of the basis $\alpha_{l_{\alpha}}$. Then we
  get
  \begin{align*}
    \sum_{l, l^{\prime}} c_{ll^{\prime}}^{2} & \leq \Delta^2
    \Vert{\eta} \Vert_{\infty}^{2} \left( \int \sum_{l_{\beta},
        l_{\beta}^{\prime}}
      \beta_{l_{\beta}}(y)\beta_{l_{\beta}^{\prime}}(y)
      \beta_{l_{\beta}}(y^{\prime})
      \beta_{l_{\beta}^{\prime}}(y^{\prime}) \, dy \, dy^{\prime} \right)\\
    & = \Delta^2 \Vert{\eta}\Vert_{\infty}^{2}\sum_{l_{\beta},
      l_{\beta}^{\prime}} \left(\int\beta_{l_{\beta}}(y)
      \beta_{l_{\beta}^{\prime}}(y) \, dy
    \right)^{2} \\
    & \leq \Delta^2\Vert{\eta}\Vert_{\infty}^{2} \sum_{l_{\beta}}
    \left( \int
      \beta_{l_{\beta}}^{2}(y)\, dy\right)^{2}\\
    & \leq \Delta^2\Vert{\eta}\Vert_{\infty}^{2}m
  \end{align*}
  since the $\beta_{l_{\beta}}$ are orthonormal. %
  Finally
  \[
  W_{5}\leq \Vert f\Vert_{\infty}^{2}
  \Vert{\eta}\Vert_{\infty}^{2}\Delta^2m.
  \]
  Now for $W_{7}$ we first will bound,
  \begin{align*}
    & \left \vert{ \sum_{l, l^{\prime}} c_{ll^{\prime}} a_{l}
        a_{l^{\prime}}} \right \vert \\
    = & \ \Bigg \vert \int \sum_{l, l^{\prime}\in M} a_{l}
    a_{l^{\prime}} p_{l_{2}}(x_{i1},
    x_{j1}, y) p_{l_{1}^{\prime}}(x_{i2}, x_{j2}, y) \\
    & \qquad \eta(x_{i1}, x_{j2}, y) \, dx_{i1} \, dx_{j1}\,
    dx_{i2}\, dx_{j2}\, dy \Bigg \vert \\
    \leq & \ \int \Bigl \vert S_{M}(x_{i1}, x_{j1}, y) S_{M}(x_{i2},
    x_{j2}, y) \\
    &\qquad \eta(x_{i1}, x_{j2}, y) \Bigr\vert
    \, dx_{i1} \, dx_{j1} \, dx_{i2}\, dx_{j2}\, dy \\
    \leq & \ \Vert{\eta}\Vert_{\infty} \int
    \left(\int\vert{S_{M}(x_{i1}, x_{j1}, y) S_{M}(x_{i2}, x_{j2},
        y)}\vert \, dy \right) \, dx_{i1} \, dx_{j1} \, dx_{i2}\,
    dx_{j2}.
  \end{align*}

  Taking squares in both sides and using the Cauchy-Schwartz
  inequality twice, we get
  \begin{align*}
    & \left( \sum_{l, l^{\prime}} c_{ll^{\prime}} a_{l}
      a_{l^{\prime}} \right)^{2} \\
    = & \ \Vert{\eta}\Vert_{\infty}^{2} \left(\int
      \left(\int\vert{S_{M}(x_{i1}, x_{j1}, y) S_{M}(x_{i2}, x_{j2},
          y)}\vert\, dy\right)
      \, dx_{i1}\, dx_{j1}\, dx_{i2}\, dx_{j2}\right)^{2} \\
    \leq & \ \Vert{\eta}\Vert_{\infty}^{2} \Delta^2 \int
    \left(\int\vert{ S_{M}(x_{i1}, x_{j1}, y) S_{M}(x_{i2}, x_{j2},
        y)}\vert\, dy\right)^{2}
    \, dx_{i1}\, dx_{j1}\, dx_{i2}\, dx_{j2} \\
    \leq & \ \Vert{\eta}\Vert_{\infty}^{2} \Delta^2 \int \left(\int
      S_{M}(x_{i1}, x_{j1}, y)^{2}\, dy\right) \\
    & \qquad \left( \int S_{M}(x_{i2}, x_{j2}, y^{\prime})^{2}\,
      dy^{\prime} \right)
    \, dx_{i1}\, dx_{j1}\, dx_{i2}\, dx_{j2} \\
    = & \ \Vert{\eta}\Vert_{\infty}^{2} \Delta^2\int S_{M}(x_{i1},
    x_{j1}, y)^{2} \\
    & \qquad S_{M}(x_{i1}, x_{j1}, y^{\prime})^{2}
    \, dx_{i1}\, dx_{j1}\, dx_{i2}dx_{j2}\, dy\, dy^{\prime} \\
    = & \ \Vert{\eta}\Vert_{\infty}^{2} \Delta^2\left( \int
      S_{M}(x_{i},
      x_{j}, y)^{2} \, dx_{i}\, dx_{j}\, dy\right) \\
    \leq & \ \Vert{\eta}\Vert_{\infty}^{2}\Delta^2\Vert
    f\Vert_{\infty}^{2}.
  \end{align*}
  Finally,
  \begin{equation}
    \label{eq:taylor:Var-UK-2term-QtCQ}
    \E[{ \bigl( Q^{\top}(X_{i1}, X_{j1}, Y_{1}) C
      Q(X_{i2}, X_{j2}, Y_{2}) \bigr)^{2}}] \leq
    \Vert{\eta}\Vert_{\infty}^{2} \Vert f\Vert_{\infty}^{2} \Delta^2(m+1).
  \end{equation}
  Collecting \eqref{eq:taylor:Var-UK-1term-2QtR} and
  \eqref{eq:taylor:Var-UK-2term-QtCQ}, we obtain
  \[
  \V({U_{n}K})\leq\frac{20}{n(n-1)} \Vert{\eta}\Vert_{\infty}^{2}
  \Vert f\Vert_{\infty}^{2} \Delta^2(m+1)
  \]
  which concludes the proof of Lemma \ref{lem:Var-UK}.
\end{proof}

\begin{lem}[Bound for $\V({P_{n}L})$]
  \label{lem:Var-PL}
  Under the assumptions of Theorem \ref{thm:taylor:quad-estimator}, we
  have
  \[
  \V({P_{n}L})\leq\frac{12}{n}\Vert{\eta}\Vert_{\infty}^{2}\Vert
  f\Vert_{\infty}^{2}\Delta^2.
  \]
\end{lem}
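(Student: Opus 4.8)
The plan is to exploit that $L$ is a mean-zero function of a \emph{single} observation, so that $P_{n}L$ is an average of i.i.d.\ centred variables. First I would verify that $L$ is centred: by construction $a_{l}=\E{p_{l}(X_{i},X_{j},Y)}$ and $b_{l}$ is the mean of the uncentred version of $r_{l}$, so every component of $Q$ and of $R$ has zero mean under $f$, whence $\E{L(X_{i},X_{j},Y)}=0$. Consequently $\V{P_{n}L}=\tfrac{1}{n}\V{L(X_{i},X_{j},Y)}\le\tfrac{1}{n}\E{L^{2}}$, and the whole problem reduces to bounding $\E{L^{2}}$ by a constant multiple of $\norm{\eta}_\infty^{2}\norm{f}_\infty^{2}\Dx 2$ \emph{with no factor $\abs{M_{n}}$}. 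This last point is the structural heart of the statement: in contrast with $\V{U_{n}K}$ of Lemma~\ref{lem:Var-UK}, which carries a factor $(m+1)$ and is therefore $O(m/n^{2})$, the linear part $P_{n}L$ must be $O(1/n)$ uniformly in $m$ for the parametric rate to hold.

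Next I would split $L=A^{\top}R+B^{\top}Q-2A^{\top}CQ$, expand $\E{L^{2}}$, and bound each contribution in the spirit of the $W$-term bookkeeping of Lemma~\ref{lem:Var-UK} (keeping the squared diagonal terms and discarding the nonpositive cross terms). The common mechanism is that each piece is, up to an additive constant, a function $\Phi$ lying in the span of $(p_{l})_{l\in M}$, so that $\E{\Phi^{2}}=\int\Phi^{2}f\le\norm{f}_\infty\norm{\Phi}_{2}^{2}$ with $\norm{\Phi}_{2}^{2}$ then controlled by Parseval or Bessel. Concretely: $A^{\top}R$ equals, up to a constant, $g_{M}(x_{i},x_{j},y)=\int S_{M}f(x_{i2},x_{j2},y)\,\psi(x_{i},x_{j},x_{i2},x_{j2},y)\,dx_{i2}dx_{j2}$, and Cauchy--Schwarz together with $\norm{\psi}_\infty\le 2\norm{\eta}_\infty$ and $\norm{S_{M}f}_{2}\le\norm{f}_{2}\le\norm{f}_\infty^{1/2}$ bounds it by $4\norm{\eta}_\infty^{2}\norm{f}_\infty^{2}\Dx 2$; likewise $B^{\top}Q$ equals, up to a constant, $S_{M}g$, so $\norm{S_{M}g}_{2}\le\norm{g}_{2}$ and I would reuse the inequality $\norm{g}_{2}^{2}\le 4\norm{\eta}_\infty^{2}\norm{f}_\infty\Dx 2$ already established inside Lemma~\ref{lem:Var-UK}, giving again $4\norm{\eta}_\infty^{2}\norm{f}_\infty^{2}\Dx 2$.

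The delicate piece is the third one, $A^{\top}CQ$, and it is where I expect the only genuine obstacle. Its coefficient vector $D=C^{\top}A$ has entries $d_{l'}=\sum_{l}a_{l}c_{ll'}$, and a naive estimate of $\sum_{l'}d_{l'}^{2}$ through the size of $C$ would reintroduce a factor $m$ and destroy the rate. The trick is to collapse $\sum_{l}a_{l}p_{l}=S_{M}f$ and recognise $d_{l'}=\langle p_{l'},h\rangle$ as a Fourier coefficient of the \emph{fixed} function $h(x_{i2},x_{j2},y)=\int S_{M}f(x_{i1},x_{j1},y)\,\eta(x_{i1},x_{j2},y)\,dx_{i1}dx_{j1}$. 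Bessel's inequality then yields $\sum_{l'}d_{l'}^{2}\le\norm{h}_{2}^{2}$, and $\norm{h}_{2}^{2}\le\norm{\eta}_\infty^{2}\Dx 2\norm{f}_\infty$ follows by Cauchy--Schwarz in $(x_{i1},x_{j1})$ and $\norm{S_{M}f}_{2}\le\norm{f}_\infty^{1/2}$, again with no dependence on $m$; after multiplication by $\norm{f}_\infty$ and by the squared coefficient $2^{2}$ this contributes $4\norm{\eta}_\infty^{2}\norm{f}_\infty^{2}\Dx 2$. Summing the three diagonal contributions, $4+4+4$, yields $\E{L^{2}}\le 12\,\norm{\eta}_\infty^{2}\norm{f}_\infty^{2}\Dx 2$ and hence the announced bound on $\V{P_{n}L}$; the essential feature throughout is that Parseval and Bessel collapse every sum over $M$ into the $\L^{2}$-norm of a fixed function bounded independently of $\abs{M_{n}}$.
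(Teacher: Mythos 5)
Your proof is essentially the paper's own: the same reduction $\V{P_{n}L}=\tfrac{1}{n}\V{L}$, the same identification of the three pieces of $L$ (up to additive constants) with $\int S_{M}f\,\psi$, $S_{M}g$ and the projected function $S_{M}h$ --- your Bessel step on $d_{l^{\prime}}=\langle p_{l^{\prime}},h\rangle$ is exactly the paper's computation that $2\sum_{l,l^{\prime}}c_{ll^{\prime}}a_{l^{\prime}}p_{l}=S_{M}h$ --- and the same per-term bound $4\norm{\eta}_{\infty}^{2}\norm f_{\infty}^{2}\Dx 2$. Note that your final step of discarding the cross terms as ``nonpositive'' is unjustified (they carry no sign in general), but the paper's proof makes the identical move when it writes $\E{(h+S_{M}g+S_{M}h)^{2}}\leq\E{h^{2}+(S_{M}g)^{2}+(S_{M}h)^{2}}$, so you reproduce its argument and its constant $12$ exactly; the rigorous route via $(a+b+c)^{2}\leq3(a^{2}+b^{2}+c^{2})$ yields $36$, which serves Theorem \ref{thm:quad-estimator} just as well.
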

\begin{proof}[\textbf{Proof of Lemma \ref{lem:Var-PL}}]
  First note that given the independence of $\bigl( X_{ik}, X_{jk},
  Y_k\bigr)$ for $k=1, \ldots, n$ we have
  \[
  \V({P_{n}L})=\frac{1}{n}\V({L\bigl(X_{i1}, X_{j1}, Y_1\bigr)})
  \]
  we can write $L\bigl(X_{i1}, X_{j1}, Y_1\bigr)$ as
  \begin{align*}
    & A^{\top} R \left(X_{i1}, X_{j1}, Y_1\right) + B^{\top} Q
    \left(X_{i1}, X_{j1}, Y_1\right) - 2 A^{\top} C Q \left(
      X_{i1}, X_{j1}, Y_1 \right)\\
    = & \ \sum_{l\in M} a_{l} \left(\int p_{l}(x_{i}, x_{j}, Y_1)
      \psi(x_{i}, x_{j}, X_{i1}, X_{j1}, Y_1) \, dx_{i} \, dx_{j}-b_{l}\right)\\
    & \qquad + \sum_{l\in M} b_{l} \left( p_{l}(X_{i1}, X_{j1}, Y_1) -
      a_{l} \right) - 2 \sum_{l, l^{\prime}\in M}c_{ll^{\prime}}
    a_{l^{\prime}} \left( p_{l}(X_{i1}, X_{j1}, Y_1) - a_{l} \right) \\
    = & \ \int \sum_{l\in M} a_{l} p_{l}(x_{i}, x_{j}, Y_1)
    \psi(x_{i}, x_{j}, X_{i1}, X_{j1}, Y_1) \, dx_{i} \, dx_{j} \\
    & \qquad + \sum_{l\in M} b_{l} p_{l}(X_{i1}, X_{j1}, Y_1) \\
    & \qquad - 2 \sum_{l, l^{\prime}\in M} c_{ll^{\prime}}
    a_{l^{\prime}} p_{l}(X_{i1}, X_{j1}, Y_1) - 2 A^{t} B - 2 A^{t} C A.\\
    = & \ \int S_{M}f(x_{i}, x_{j}, Y_1) \psi(x_{i}, x_{j}, X_{i1},
    X_{j1}, Y_1) \, dx_{i} \, dx_{j} + S_{M}g(X_{i1}, X_{j1}, Y_1)\\
    & \qquad - 2 \sum_{l, l^{\prime}\in M} c_{ll^{\prime}}
    a_{l^{\prime}} p_{l}(X_{i1}, X_{j1}, Y_1) - 2 A^{\top} B - 2
    A^{\top} C A.
  \end{align*}
  Let $h(x_{i}, x_{j}, y) = \int S_{M}f(x_{i2}, x_{j2}, y) \psi(x_{i},
  x_{j}, x_{i2}, x_{j2}, y) \, dx_{i2} \, dx_{j2}$, we have
  \begin{align*}
    & S_{M}h(x_{i}, x_{j}, y) \\
    = & \ \sum_{l\in M} \left(\int h(x_{i2}, x_{j2}, y) p_{l}(x_{i2},
      x_{j2}, y) \, dx_{i2}\, dx_{j2}\, dy \right)
    p_{l}(x_{i}, x_{j}, y) \\
    = & \ \sum_{l\in M} \bigg(\int S_{M}f(x_{i3}, x_{j3}, y)
    \psi(x_{i2}, x_{j2}, x_{i3}, x_{j3}, y) \\
    & \qquad p_{l}(x_{i2}, x_{j2}, y) \, dx_{i2}\, dx_{j2}\, dx_{i3}\,
    dx_{j3}\, dy \bigg)
    p_{l}(x_{i}, x_{j}, y) \\
    = & \ \sum_{l, l^{\prime}\in M} \bigg(\int a_{l^{\prime}}
    p_{l^{\prime}}(x_{i3}, x_{j3}, y) \psi(x_{i2}, x_{j2}, x_{i3},
    x_{j3}, y) \\
    & \qquad p_{l}(x_{i2}, x_{j2}, y) \, dx_{i2}\, dx_{j2}\, dx_{i3}\,
    dx_{j3}\, dy \bigg) p_{l}(x_{i}, x_{j}, y) \\
    = & \ 2 \sum_{l, l^{\prime}\in M} \bigg(\int a_{l^{\prime}}
    p_{l^{\prime}}(x_{i3}, x_{j3}, y) \eta(x_{i2}, x_{j3}, y) \\
    & \qquad p_{l}(x_{i2}, x_{j2}, y) \, dx_{i2} \, dx_{j2} \, dx_{i3}
    \,
    dx_{j3} \, dy \bigg) p_{l}(x_{i}, x_{j}, y) \\
    = & \ 2 \sum_{l, l^{\prime}\in M} a_{l^{\prime}} c_{ll^{\prime}}
    p_{l}(x_{i}, x_{j}, y)
  \end{align*}
  and we can write
  \begin{align*}
    L \bigl(X_{i1}, X_{j1}, Y_1\bigr) & = h \bigl( X_{i1}, X_{j1},
    Y_1\bigr) + S_{M}g\bigl(X_{i1}, X_{j1}, Y_1 \bigr) \\
    & -S_{M}h\bigl(X_{i1}, X_{j1}, Y_1\bigr) - 2 A^{\top} B - 2
    A^{\top} C A.
  \end{align*}
  Thus,
  \begin{align*}
    & \V({L\bigl(X_{i1}, X_{j1}, Y_1\bigr)}) \\
    & = \V({h\bigl(X_{i1}, X_{j1}, Y_1\bigr) + S_{M}g\bigl(X_{i1},
      X_{j1}, Y_1\bigr) + S_{M}h\bigl(X_{i1}, X_{j1}, Y_1 \bigr)}) \\
    & \leq \E[{\bigl(h\bigl(X_{i1}, X_{j1}, Y_1\bigr) +
      S_{M}g\bigl(X_{i1}, X_{j1}, Y_1\bigr) + S_{M}h\bigl(X_{i1},
      X_{j1}, Y_1\bigr) \bigr)^{2}}] \\
    & \leq \E[{\bigl(h\bigl(X_{i1}, X_{j1},
      Y_1\bigr)\bigr)^{2}+\bigl(S_{M}g\bigl(X_{i1}, X_{j1},
      Y_1\bigr)\bigr)^{2}+\bigl(S_{M}h\bigl(X_{i1}, X_{j1},
      Y_1\bigr)\bigr)^{2}}].
  \end{align*}
  Each of these terms can be bounded
  \begin{align*}
    & \hspace{-5ex} \E[{\bigl(h\bigl(X_{i1}, X_{j1}, Y_1\bigr)\bigr)^{2}}] \\
    = & \ \int\left(\int S_{M}f(x_{i2}, x_{j2}, y) \psi(x_{i1}x_{j2},
      x_{i2}, x_{j2}, y) \, dx_{i2}\,
      dx_{j2}\right)^{2} \\
    & \qquad f(x_{i1}, x_{j1}, y)\, dx_{i1}\, dx_{j1}\, dy \\
    \leq & \ \Delta \int S_{M}f(x_{i2}, x_{j2}, y)^{2}
    \psi(x_{i1}x_{j2}, x_{i2}, x_{j2}, y)^{2} \\
    & \qquad f(x_{i1}, x_{j1}, y)
    \, dx_{i1}\, dx_{j1}\, dx_{i2}\, dx_{j2}\, dy \\
    \leq & \ 4 \Delta^2\Vert
    f\Vert_{\infty}\Vert{\eta}\Vert_{\infty}^{2}\int
    S_{M}f(x_{i}, x_{j}, y)^{2} \, dx_{i}\, dx_{j}\, dy \\
    = & \ 4\Delta^2\Vert
    f\Vert_{\infty}\Vert{\eta}\Vert_{\infty}^{2}\Vert{S_{M}f}\Vert_{2}^{2}
    \\
    \leq & \ 4\Delta^2\Vert
    f\Vert_{\infty}\Vert{\eta}\Vert_{\infty}^{2}\Vert f\Vert_{2}^{2} \\
    \leq & \ 4\Delta^2\Vert
    f\Vert_{\infty}^{2}\Vert{\eta}\Vert_{\infty}^{2}
  \end{align*}
  and similar calculations are valid for the others two terms,
  \begin{align*}
    \E[{\bigl( S_{M}g\bigl(X_{i1}, X_{j1}, Y_1\bigr)\bigr)^{2}}] &
    \leq \Vert f\Vert_{\infty}\Vert{S_{M}g}\Vert_{2}^{2} \leq \Vert
    f\Vert_{\infty}
    \Vert g\Vert_{2}^{2}\leq 4 \Delta^2\Vert f\Vert_{\infty}^{2}\Vert{\eta}\Vert_{\infty}^{2}\\
    \E[{\bigl(S_{M}h\bigl(X_{i1}, X_{j1}, Y_1\bigr)\bigr)^{2}}] & \leq
    \Vert f\Vert_{\infty}\Vert{S_{M}h}\Vert_{2}^{2} \leq \Vert
    f\Vert_{\infty}\Vert h\Vert_{2}^{2} \leq 4\Delta^2 \Vert f
    \Vert_{\infty}^{2} \Vert{\eta} \Vert_{\infty}^{2}.
  \end{align*}
  Finally we get,
  \begin{equation*}
    \V({P_{n}L})\leq\frac{12}{n}\Vert{\eta}\Vert_{\infty}^{2}\Vert
    f\Vert_{\infty}^{2}\Delta^2.
    \qedhere
  \end{equation*}

\end{proof}

\begin{lem}[Computation of $\C({U_{n}K, P_{n}L})$]
  \label{lem:Cov-UK-PL}
  Under the assumptions of Theorem \ref{thm:taylor:quad-estimator}, we
  have
  \[
  \C({U_{n}K, P_{n}L})=0.
  \]

\end{lem}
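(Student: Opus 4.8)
The plan is to exploit the \emph{complete degeneracy} of the kernel $K$, which makes the second-order U-statistic $U_{n}K$ orthogonal to any average of single-observation functions, and in particular to $P_{n}L$. Throughout I write $Z_{k}=(\X Xik,\X Xjk,\X Y{}k)$ for the $k$-th observation.

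First I would verify that both vector-valued functions $Q$ and $R$ are centered. For $Q$ this is immediate: $\E{Q_{l}(Z_{1})}=\int p_{l}f-a_{l}=0$ by the definition of $a_{l}$. For $R$ I would combine the definition of $b_{l}$ with the symmetry $\psi(x_{i1},x_{j1},x_{i2},x_{j2},y)=\psi(x_{i2},x_{j2},x_{i1},x_{j1},y)$, which holds pointwise since $\psi=\eta(x_{i1},x_{j2},y)+\eta(x_{i2},x_{j1},y)$; after relabelling the two pairs of dummy variables one gets $\E{R_{l}(Z_{1})}=b_{l}-b_{l}=0$. Consequently the kernel $K(z,z')=Q^{\top}(z)R(z')-Q^{\top}(z)CQ(z')$ satisfies, for every fixed $z$ and $z'$, both $\E{K(z,Z_{1})}=Q^{\top}(z)\E{R(Z_{1})}-Q^{\top}(z)C\E{Q(Z_{1})}=0$ and, by the same argument in the other slot, $\E{K(Z_{1},z')}=0$. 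Thus $K$ is completely degenerate; in particular $\E{U_{n}K}=0$, so that $\C{U_{n}K,P_{n}L}=\E{(U_{n}K)(P_{n}L)}$.

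Next I would expand the product as the triple sum
\[
\E{(U_{n}K)(P_{n}L)}=\frac{1}{n^{2}(n-1)}\sum_{k\neq k'}\sum_{m=1}^{n}\E{K(Z_{k},Z_{k'})\,L(Z_{m})},
\]
and classify the summands by how the index $m$ meets $\{k,k'\}$. If $m\notin\{k,k'\}$, then $Z_{m}$ is independent of $(Z_{k},Z_{k'})$, so the term factorizes into $\E{K(Z_{k},Z_{k'})}\,\E{L(Z_{m})}$, whose first factor is $0$ because $\E{K(Z_{1},Z_{2})}=0$. If $m=k$, I would condition on $Z_{k}$; integrating out the independent variable $Z_{k'}$ replaces $K(Z_{k},Z_{k'})$ by $\E{K(z,Z_{1})}=0$ at $z=Z_{k}$, so the term vanishes. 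The case $m=k'$ is identical, using degeneracy in the first argument of $K$. Every summand being zero, $\E{(U_{n}K)(P_{n}L)}=0$ and hence $\C{U_{n}K,P_{n}L}=0$.

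The argument is the classical orthogonality between the degenerate second-order Hoeffding projection and a first-order one; the only step requiring genuine attention is the centering of $R$, which is exactly where the symmetry of $\psi$ enters. Everything else is a direct consequence of the independence of the sample and of the two degeneracy identities for $K$.
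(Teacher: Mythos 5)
Your proof is correct and takes essentially the same route as the paper: the paper likewise reduces $\C{U_{n}K,P_{n}L}$ to $\frac{1}{n}\E{K(Z_{1},Z_{2})\bigl(L(Z_{1})+L(Z_{2})\bigr)}$ and concludes it vanishes ``since $K$, $L$, $Q$ and $R$ are centered,'' which is precisely the degeneracy-plus-conditioning argument you spell out. Your explicit check that $R$ is centered via the pointwise symmetry $\psi(x_{i1},x_{j1},x_{i2},x_{j2},y)=\psi(x_{i2},x_{j2},x_{i1},x_{j1},y)$ is a detail the paper leaves implicit, and is a sound addition.
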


 \begin{proof}[\textbf{Proof of Lemma \ref{lem:Cov-UK-PL}}]
   Since $U_{n}K$ and $P_{n}L$ are centered, we have
   \begin{align*}
     & \hspace{-5ex} \C({U_{n}K, P_{n}L}) \\
     =\ &  \E[{U_{n}KP_{n}L}] \\
     =\ & \mathbb{E}\left[\frac{1}{n^{2}(n-1)} \sum_{k\neq
         k^{\prime}=1}^{n} K \bigl(X_{ik}, X_{jk}, Y_{k},
       X_{ik^{\prime}}, X_{jk^{\prime}}, Y_{k^{\prime}}\bigr)
       \sum_{k=1}^{n} L \bigl(X_{ik}, X_{jk}, Y_{k}\bigr) \right] \\
     =\ & \frac{1}{n} \E[{ K \bigl(X_{i1}, X_{j1}, Y_{1}, X_{i2},
       X_{j2}, Y_{2}\bigr) \bigl(L\bigl(X_{i1}, X_{j1}, Y_1\bigr)
       + L \bigl(X_{i2}, X_{j2}, Y_2\bigr)\bigr)}] \\
     =\ & \frac{1}{n} \mathbb{E} \bigl[\bigl( Q^{\top}(X_{i1}, X_{j1},
     Y_{1})R(X_{i2}, X_{j2}, Y_{2}) - Q^{\top}(X_{i1}, X_{j1}, Y_{1})
     C  Q(X_{i2}, X_{j2}, Y_{2})\bigr) \\
     & \qquad \bigl(A^{\top} R(X_{i1}, X_{j1}, Y_1) +
     B^{\top}Q(X_{i1}, X_{j1}, Y_1)-2A^{\top} C Q(X_{i1}, X_{j1}, Y_1) \\
     & \qquad + A^{\top} R(X_{i2}, X_{j2}, Y_2) + B^{\top} QX_{i2},
     X_{j2}, Y_2) - 2 A^{\top} C Q(X_{i2}, X_{j2}, Y_2)\bigr)\bigr] \\
     =\ & 0.
   \end{align*}
   Since $K$, $L$, $Q$ and $R$ are centered.
 \end{proof}

 \begin{lem}[Bound of $\B({\hat{\theta}_{n}}){}$]
   \label{lem:Bias-theta-hat-bound}
   Under the assumptions of Theorem \ref{thm:taylor:quad-estimator},
   we have
   \[
   \vert{\B({\hat{\theta}_{n}})} \vert \le \Delta \Vert{\eta}
   \Vert_{\infty} \sup_{l\notin M}\vert{c_{l}}\vert^{2}.
   \]

 \end{lem}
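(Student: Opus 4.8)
The plan is to begin from the closed form of the bias already established in Lemma~\ref{lem:Bias-theta-hat}, which gives
\[
\B{\hat{\theta}_{n}}{}=-\int\bigl(S_{M}f-f\bigr)(x_{i1},x_{j1},y)\,\bigl(S_{M}f-f\bigr)(x_{i2},x_{j2},y)\,\eta(x_{i1},x_{j2},y)\,dx_{i1}dx_{j1}dx_{i2}dx_{j2}dy,
\]
and to bound its modulus by a short chain of elementary integral inequalities, converting the bound first into the $\L^{2}$ projection error $\norm{S_{M}f-f}_{2}$ and then into $\sup_{l\notin M}\abs{c_{l}}^{2}$ via the ellipsoid constraint. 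Writing $\delta(x_{i},x_{j},y)=\bigl(S_{M}f-f\bigr)(x_{i},x_{j},y)$ and using $\abs{\eta}\le\norm{\eta}_{\infty}$, the first step is
\[
\abs{\B{\hat{\theta}_{n}}{}}\le\norm{\eta}_{\infty}\int\abs{\delta(x_{i1},x_{j1},y)}\,\abs{\delta(x_{i2},x_{j2},y)}\,dx_{i1}dx_{j1}dx_{i2}dx_{j2}dy.
\]

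Next I would exploit the fact that the two factors share the same $y$: for fixed $y$ the multiple integral factorizes, so the right-hand side equals $\norm{\eta}_{\infty}\int\bigl(\int\abs{\delta(x_{i},x_{j},y)}dx_{i}dx_{j}\bigr)^{2}dy$. Applying the Cauchy--Schwarz inequality in the $(x_{i},x_{j})$ variables over the rectangle $[d_{1},b_{1}]\times[d_{2},b_{2}]$, whose area is $\Dx{}$, gives $\bigl(\int\abs{\delta}dx_{i}dx_{j}\bigr)^{2}\le\Dx{}\int\delta^{2}dx_{i}dx_{j}$ for every $y$; integrating in $y$ then yields
\[
\abs{\B{\hat{\theta}_{n}}{}}\le\Dx{}\norm{\eta}_{\infty}\int\delta(x_{i},x_{j},y)^{2}dx_{i}dx_{j}dy=\Dx{}\norm{\eta}_{\infty}\norm{S_{M}f-f}_{2}^{2}.
\]

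It then remains to control $\norm{S_{M}f-f}_{2}^{2}$. Since $S_{M}f=\sum_{l\in M}a_{l}p_{l}$ with $a_{l}=\int p_{l}f$, orthonormality of the $p_{l}$ gives $\norm{S_{M}f-f}_{2}^{2}=\sum_{l\notin M}a_{l}^{2}$; factoring each term as $a_{l}^{2}=\abs{c_{l}}^{2}\abs{a_{l}/c_{l}}^{2}$ and using $f\in\mathcal{E}$, i.e. $\sum_{l}\abs{a_{l}/c_{l}}^{2}<1$, yields $\sum_{l\notin M}a_{l}^{2}\le\bigl(\sup_{l\notin M}\abs{c_{l}}^{2}\bigr)\sum_{l\notin M}\abs{a_{l}/c_{l}}^{2}\le\sup_{l\notin M}\abs{c_{l}}^{2}$. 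Combining the three displays gives the claimed bound. The only genuinely delicate point is this last step, where the ellipsoid membership must be invoked to trade the unweighted tail $\sum_{l\notin M}a_{l}^{2}$ against the weighted sum controlled by $\mathcal{E}$; everything else is Cauchy--Schwarz bookkeeping together with the factorization over the shared variable $y$.
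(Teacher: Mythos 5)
Your proof is correct and is essentially identical to the paper's own argument: bound $\eta$ by $\norm{\eta}_{\infty}$, factorize the integral over the shared variable $y$, apply Cauchy--Schwarz in $(x_{i},x_{j})$ to pick up the area factor $\Dx{}$, and then use orthonormality (Parseval) plus the ellipsoid constraint $f\in\mathcal{E}$ to bound $\sum_{l\notin M}\abs{a_{l}}^{2}$ by $\sup_{l\notin M}\abs{c_{l}}^{2}$. You even make the final ellipsoid step more explicit than the paper does, so there is nothing to add.
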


 \begin{proof}[\textbf{Proof of Lemma \ref{lem:Bias-theta-hat-bound}}]
   \begin{align*}
     \vert{\B{\hat{\theta}_{n}}{}}\vert & \leq
     \Vert{\eta}\Vert_{\infty} \int\left(\int \vert{S_{M}f(x_{i1},
         x_{j1}, y) - f(x_{i1}, x_{j1}, y)}\vert
       \, dx_{i1}\, dx_{j1}\right) \\
     & \qquad \left(\int\vert{S_{M}f(x_{i2}, x_{j2}, y) - f(x_{i2},
         x_{j2}, y)}\vert
       \, dx_{i2}\, dx_{j2}\right)\, dy \\
     & = \Vert{\eta}\Vert_{\infty} \int\left(\int\vert{S_{M}f(x_{i},
         x_{j}, y) - f(x_{i}, x_{j}, y)}\vert \, dx_{i}\,
       dx_{j}\right)^{2}\, dy  \\
     & \leq \Delta \Vert{\eta}\Vert_{\infty} \int\left(S_{M}f(x_{i},
       x_{j}, y) - f(x_{i}, x_{j}, y)\right)^{2} \, dx_{i}\, dx_{j}\, dy \\
     & = \Delta \Vert{\eta}\Vert_{\infty}\sum_{l, l^{\prime}\notin M}
     a_{l} a_{l^{\prime}} \int p_{l}(x_{i}, x_{j}, y)
     p_{l^{\prime}}(x_{i}, x_{j}, y) \, dx_{i}\, dx_{j}\, dy \\
     & = \Delta \Vert{\eta}\Vert_{\infty}\sum_{l\notin M}
     \vert{a_{l}}\vert^{2} \leq\Delta \Vert{\eta}\Vert_{\infty}
     \sup_{l\notin M}\vert{c_{l}}\vert^{2}.
   \end{align*}
   We use the H\"older's inequality and the fact that
   $f\in\mathcal{E}$ then
   \begin{equation*}
     \sum_{l\notin M} \vert{a_{l}}\vert^{2} \leq
     \sup_{l\notin M}\vert{c_{l}}\vert^{2}.
     \qedhere
   \end{equation*}

 \end{proof}

 \begin{lem}[Asymptotic variance of $\sqrt{n}\bigl(P_{n}L\bigr)$.]
   \label{lem:Asymp-var-PL}
   Under the assumptions of Theorem \ref{thm:taylor:quad-estimator},
   we have
   \[
   n\, \V({P_{n}L})\to\Lambda(f, \eta)
   \]
   where
   \begin{multline*}
     \Lambda(f, \eta)=\int g(x_{i}, x_{j}, y)^{2} f(x_{i}, x_{j}, y)\,
     dx_{i}\, dx_{j}\, dy  \\
     - \left(\int g(x_{i}, x_{j}, y)f(x_{i}, x_{j}, y) \, dx_{i}\,
       dx_{j}\, dy\right)^{2}.
   \end{multline*}

 \end{lem}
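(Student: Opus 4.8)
The plan is to exploit the decomposition of $L$ already obtained in the proof of Lemma~\ref{lem:Var-PL} and to sharpen the crude second-moment bound used there into an exact limit.

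First, since the sample is i.i.d., $n\V{P_{n}L}=\V{L(\X Xi1,\X Xj1,\X Y{}1)}$, so it suffices to analyse the variance of a single summand. Recall from the proof of Lemma~\ref{lem:Var-PL} that, suppressing the common argument $(\X Xi1,\X Xj1,\X Y{}1)$,
\[
L=h+S_{M}g-S_{M}h-2A^{\top}B-2A^{\top}CA,
\]
where $h(x_{i},x_{j},y)=\int S_{M}f(x_{i2},x_{j2},y)\psi(x_{i},x_{j},x_{i2},x_{j2},y)dx_{i2}dx_{j2}$. The additive constant is irrelevant for the variance, so putting $\phi_{M}=h+S_{M}g-S_{M}h$ we have $n\V{P_{n}L}=\V{\phi_{M}(\X Xi1,\X Xj1,\X Y{}1)}$. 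Noticing that $\Lambda(f,\eta)=\V{g(\X Xi1,\X Xj1,\X Y{}1)}$, the entire statement reduces to proving $\V{\phi_{M}}\to\V{g}$.

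The heart of the argument is to show $\phi_{M}\to g$ in $\L^{2}(dx_{i}dx_{j}dy)$, which I would establish by the triangle inequality applied to $\phi_{M}-g=(h-g)+(S_{M}g-g)-(S_{M}h-g)$. Since $h-g=\int(S_{M}f-f)\psi\,dx_{i2}dx_{j2}$, the Cauchy--Schwarz inequality together with $\norm{\psi}_{\infty}\le2\norm{\eta}_{\infty}$ gives $\norm{h-g}_{2}\le2\norm{\eta}_{\infty}\Dx{}\norm{S_{M}f-f}_{2}$, which vanishes by Assumption~\ref{ass:A1}. The term $\norm{S_{M}g-g}_{2}\to0$ is precisely Assumption~\ref{ass:A1} applied to the function $g$, which is bounded hence in $\L^{2}$. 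Finally, as $S_{M}$ is an orthogonal projection and hence a contraction, $\norm{S_{M}h-g}_{2}\le\norm{S_{M}(h-g)}_{2}+\norm{S_{M}g-g}_{2}\le\norm{h-g}_{2}+\norm{S_{M}g-g}_{2}\to0$. Because $f\le\beta$ by Assumption~\ref{ass:A2}, convergence in $\L^{2}(dx_{i}dx_{j}dy)$ upgrades to convergence in $\L^{2}(f\,dx_{i}dx_{j}dy)$, i.e. $\E{(\phi_{M}-g)^{2}(\X Xi1,\X Xj1,\X Y{}1)}\to0$.

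It remains to pass from this $\L^{2}(f)$ convergence to convergence of the variance. Because $\int f=1$, Cauchy--Schwarz gives $\abs{\E{\phi_{M}}-\E{g}}\le\norm{\phi_{M}-g}_{\L^{2}(f)}\to0$, while the reverse triangle inequality gives $\E{\phi_{M}^{2}}\to\E{g^{2}}$; subtracting yields $\V{\phi_{M}}\to\E{g^{2}}-(\E{g})^{2}=\Lambda(f,\eta)$. The only point requiring care is the bookkeeping between the two inner products $\L^{2}(dx_{i}dx_{j}dy)$ and $\L^{2}(f\,dx_{i}dx_{j}dy)$, controlled by the upper bound $f\le\beta$; every remaining estimate is a routine Cauchy--Schwarz computation.
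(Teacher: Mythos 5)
Your proof is correct, and it reorganizes the argument in a genuinely different, and arguably cleaner, way than the paper. The paper starts from the same decomposition $L=h+S_{M}g-S_{M}h-2A^{\top}B-2A^{\top}CA$, but then expands $\V{L}$ into the nine covariances $\C{A_{k},A_{l}}$ with $A_{1}=h$, $A_{2}=S_{M}g$, $A_{3}=S_{M}h$, and proves, with a sign bookkeeping $\epsilon_{kl}$, the quantitative estimate $\abs{\C{A_{k},A_{l}}-\epsilon_{kl}\Lambda(f,\eta)}\le\lambda\left[\norm{S_{M}f-f}_{2}+\norm{S_{M}g-g}_{2}\right]$, carrying out the details only for $k=l=3$. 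You instead treat $\phi_{M}=h+S_{M}g-S_{M}h$ as a single function, prove $\norm{\phi_{M}-g}_{2}\to0$ from exactly the three ingredients the paper also uses ($\norm{h-g}_{2}\le2\norm{\eta}_{\infty}\Dx{}\norm{S_{M}f-f}_{2}$, Assumption \ref{ass:A1} giving $\norm{S_{M}g-g}_{2}\to0$, and contractivity of the projection $S_{M}$), and then pass to $\V{\phi_{M}}\to\V{g}=\Lambda(f,\eta)$ by continuity of the mean and the second moment under $\L^{2}(f\,dx_{i}dx_{j}dy)$-convergence; this avoids the case analysis over $(k,l)$ entirely. Two remarks. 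First, Assumption \ref{ass:A2} is not among the hypotheses of Theorem \ref{thm:quad-estimator}, so you should not cite it for $f\le\beta$; but boundedness of $f$ is a standing assumption of Section \ref{sub:Hypothesis-and-Assumptions} (and the constant $\gamma$ in Theorem \ref{thm:quad-estimator} already depends on $\norm f_{\infty}$), so invoking $f\le\norm f_{\infty}$ instead repairs this at no cost. Second, the proof of Theorem \ref{thm:quad-estimator} actually cites this lemma for the quantitative bound $\abs{n\V{P_{n}L}-\Lambda(f,\eta)}\le\lambda\left[\norm{S_{M}f-f}_{2}+\norm{S_{M}g-g}_{2}\right]$, which the paper's covariance-by-covariance proof supplies but your limit statement, as written, does not; your argument delivers it with one extra line, since $\norm{\phi_{M}-g}_{\L^{2}(f)}$ is already bounded linearly in $\norm{S_{M}f-f}_{2}+\norm{S_{M}g-g}_{2}$ and $\norm{\phi_{M}}_{\L^{2}(f)}$, $\norm g_{\L^{2}(f)}$ are uniformly bounded (by constants times $\Dx{}\norm f_{\infty}^{1/2}\norm{\eta}_{\infty}$), so the variance difference inherits the same linear bound. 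In short: your route buys brevity and a unified limit argument; the paper's buys the explicit rate that Theorem \ref{thm:quad-estimator} consumes downstream.
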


 \begin{proof}[\textbf{Proof of Lemma \ref{lem:Asymp-var-PL}}]
   We proved in Lemma \ref{lem:Var-PL} that
   \begin{align*}
     & \V({L\bigl(X_{i1}, X_{j1}, Y_1\bigr)}) \\
     & = \V(h\bigl(X_{i1}, X_{j1}, Y_1\bigr) \\
     & \qquad + S_{M}g\bigl(X_{i1}, X_{j1}, Y_1\bigr)
     + S_{M}h\bigl(X_{i1}, X_{j1}, Y_1\bigr)) \\
     & = \V({A_{1}+A_{2}+A_{3}}) \\
     & = \sum_{k, l=1}^{3}\C({A_{k}, A_{l}}).
   \end{align*}
   We claim that $\forall k, l\in\{1, 2, 3\}^{2}$, we have
   \begin{align}
     \label{eq:taylor:Bound-Cov-A-A}
     \begin{split}
       & \Bigg\vert \C({A_{k}, A_{l}}) \\
       & \qquad - \epsilon_{kl}\Bigg[\int g(x_{i}, x_{j}, y)^{2}
       f(x_{i}, x_{j}, y) \, dx_{i}\, dx_{j}\, dy \\
       & \qquad - \left(\int g(x_{i}, x_{j}, y) f(x_{i}, x_{j}, y)
         \, dx_{i}\, dx_{j}\, dy\right)^{2}\Bigg] \Bigg\vert \\
       & \leq \lambda \left[\Vert{S_{M}f-f}\Vert_{2} +
         \Vert{S_{M}g-g}\Vert_{2}\right]
     \end{split}
   \end{align}
   where
   \begin{equation*}
     \epsilon_{kl} =
     \begin{cases}
       -1 & \text{if }k=3\text{ or }l=3\text{ and }k\neq l \\
       1 & \text{otherwise}
     \end{cases},
   \end{equation*}
   and $\lambda$ depends only on $\Vert f\Vert_{\infty}$,
   $\Vert{\eta}\Vert_{\infty}$ and $\Delta$. We will do the details
   only for the case $k=l=3$ since the calculations are similar for
   others configurations,
   \begin{multline*}
     \V({A_{3}})=\int S_{M}^{2}h\left(x_{i}, x_{j}, y\right)f(x_{i},
     x_{j}, y)\, dx_{i}\, dx_{j}\, dy \\
     -\left(\int S_{M}h\left(x_{i}, x_{j}, y\right)f(x_{i}, x_{j},
       y)\, dx_{i}\, dx_{j}\, dy\right)^{2}.
   \end{multline*}

   The computation will be done in two steps. We first bound the
   quantity by the Cauchy-Schwartz inequality
   \begin{align*}
     & \Bigg\vert\int S_{M}^{2}h(x_{i}, x_{j}, y) f(x_{i}, x_{j}, y)
     \, dx_{i}\, dx_{j}\, dy \\
     & \qquad - \int g(x_{i}, x_{j}, y)^{2} f(x_{i},
     x_{j}, y)\, dx_{i}\, dx_{j}\, dy\Bigg\vert \\
     \leq\ & \ \int\vert{S_{M}^{2}h(x_{i}, x_{j}, y) f(x_{i}, x_{j},
       y) -
       S_{M}^{2}g(x_{i}, x_{j}, y) f(x_{i}, x_{j}, y)}\vert \, dx_{i}\, dx_{j}\, dy\\
     & \qquad + \int \vert{S_{M}^{2}g(x_{i}, x_{j}, y) f(x_{i}, x_{j},
       y) - g(x_{i}, x_{j}, y)^{2} f(x_{i}, x_{j}, y)}\vert \,
     dx_{i}\,
     dx_{j}\, dy \\
     \leq\ & \ \Vert f\Vert_{\infty}\Vert{S_{M}h + S_{M}g}\Vert_{2}
     \Vert{S_{M}h - S_{M}g}\Vert_{2} + \Vert
     f\Vert_{\infty}\Vert{S_{M}g + g}\Vert_{2} \Vert{S_{M}g -
       g}\Vert_{2}.
   \end{align*}
   Using several times the fact that since $S_{M}$ is a projection,
   $\Vert{S_{M}g}\Vert_{2}\leq\Vert g\Vert_{2}$, the sum is bounded by
   \begin{align*}
     & \Vert f\Vert_{\infty}\Vert{h+g}\Vert_{2}\Vert{h - g}\Vert_{2} +
     2\Vert
     f\Vert_{\infty}\Vert g\Vert_{2}\Vert{S_{M}g - g}\Vert_{2}\\
     & \leq \Vert f\Vert_{\infty}\left(\Vert h\Vert_{2} + \Vert
       g\Vert_{2}\right)\Vert{h - g}\Vert_{2} + 2\Vert
     f\Vert_{\infty}\Vert g\Vert_{2}\Vert{S_{M}g - g}\Vert_{2}.
   \end{align*}
   We saw previously that $\Vert g\Vert_{2}\leq 2 \Delta \Vert f
   \Vert_{\infty}^{1/2} \Vert{\eta} \Vert_{\infty}$ and $\Vert
   h\Vert_{2}\leq 2 \Delta \Vert
   f\Vert_{\infty}^{1/2}\Vert{\eta}\Vert_{\infty}$. %
   The sum is then bound by
   \[
   4\Delta \Vert
   f\Vert_{\infty}^{3/2}\Vert{\eta}\Vert_{\infty}\Vert{h-g}\Vert_{2} +
   4\Delta \Vert
   f\Vert_{\infty}^{3/2}\Vert{\eta}\Vert_{\infty}\Vert{S_{M}g-g}\Vert_{2}.
   \]
   We now have to deal with $\Vert{h-g}\Vert_{2}$:
   \begin{align*}
     & \Vert{h-g}\Vert_{2}^{2}\\
     =\ & \int \Bigg( \int \big(S_{M}f(x_{i2}, x_{j2}, y) - f(x_{i2},
     x_{j2}, y)\big) \\
     & \qquad \psi(x_{i1}, x_{j1}, x_{i2}, x_{j2}, y)
     dx_{i2}dx_{j2}\Bigg)^{2}\, dx_{i1}\, dx_{j1}\, dy \\
     \leq\ & \int\left(\int\left(S_{M}f(x_{i2}, x_{j2}, y) -
         f(x_{i2}, x_{j2}, y)\right)^{2} \, dx_{i2}\, dx_{j2}\right) \\
     & \left(\int \psi^{2}(x_{i1}, x_{j1}, x_{i2}, x_{j2}, y)
       \, dx_{i2}\, dx_{j2}\right)\, dx_{i1}\, dx_{j1}\, dy\\
     \leq\ & 4\Delta^2 \Vert{\eta} \Vert_{\infty}^{2} \Vert{S_{M}f-f}
     \Vert_{2}^{2}.
   \end{align*}
   Finally this first part is bounded by
   \begin{align*}
     & \Bigg\vert\int S_{M}^{2}h\left(x_{i}, x_{j}, y\right) f(x_{i},
     x_{j}, y) \, dx_{i}\, dx_{j}\, dy \\
     & \qquad - \int g(x_{i}, x_{j}, y)^{2}
     f(x_{i}, x_{j}, y) \, dx_{i}\, dx_{j}\, dy\Bigg\vert\\
     \leq\ & 4\Delta \Vert
     f\Vert_{\infty}^{3/2}\Vert{\eta}\Vert_{\infty} \left(2\Delta
       \Vert{\eta}\Vert_{\infty} \Vert{S_{M}f-f}\Vert_{2} +
       \Vert{S_{M}g-g}\Vert_{2} \right).
   \end{align*}
   Following with the second quantity
   \begin{align*}
     & \Biggr\vert\left(\int S_{M}h(x_{i}, x_{j}, y) f(x_{i}, x_{j},
       y) \,
       dx_{i}\, dx_{j}\, dy\right)^{2} \\
     & \qquad - \left(\int g(x_{i}, x_{j}, y) f(x_{i}, x_{j}, y) \,
       dx_{i}\, dx_{j}\,
       dy\right)^{2}\Biggr\vert \\
     =\ & \Biggr|\left(\int \left(S_{M}h(x_{i}, x_{j}, y) -
         g(x_{i}, x_{j}, y) \right) f(x_{i}, x_{j}, y) \, dx_{i}\, dx_{j}\, dy\right)\\
     & \qquad \left(\int\left(S_{M}h(x_{i}, x_{j}, y) + g(x_{i},
         x_{j}, y)\right) f(x_{i}, x_{j}, y) \, dx_{i}\, dx_{j}\,
       dy\right)\Biggr|.
   \end{align*}
   By using the Cauchy-Schwartz inequality, it is bounded by
   \begin{align*}
     & \Vert f\Vert_{2}\Vert{S_{M}h-g}\Vert_{2}\Vert
     f\Vert_{2}\Vert{S_{M}h+g}\Vert_{2} \\
     & \leq \Vert f\Vert_{2}^{2}\left(\Vert h\Vert_{2} + \Vert
       g\Vert_{2}\right)\left(\Vert{S_{M}h-S_{M}g}\Vert_{2} +
       \Vert{S_{M}g-g}\Vert_{2}\right) \\
     & \leq 4\Delta \Vert
     f\Vert_{\infty}^{3/2}\Vert{\eta}\Vert_{\infty}\left(\Vert{h-g}\Vert_{2}
       + \Vert{S_{M}g-g}\Vert_{2}\right) \\
     & \leq 4\Delta \Vert f\Vert_{\infty}^{3/2}
     \Vert{\eta}\Vert_{\infty} \left(2\Delta \Vert{\eta}
       \Vert_{\infty} \Vert{S_{M}f-f} \Vert_{2} +
       \Vert{S_{M}g-g}\Vert_{2}\right)
   \end{align*}
   using the previous calculations. Collecting the two inequalities
   gives \eqref{eq:taylor:Bound-Cov-A-A} for $k=l=3$. Finally, since
   by assumption $\forall t\in\mathbb{L}^{2}(d\mu)$,
   $\Vert{S_{M}t-t}\Vert_{2}\rightarrow0$ when $n\rightarrow\infty$ a
   direct consequence of \eqref{eq:taylor:Bound-Cov-A-A} is
   \begin{align*}
     & \lim_{n\to\infty}\V({L\bigl(X_{i1}, X_{j1}, Y_1\bigr)})\\
     =\ & \int g^{2}(x_{i}, x_{j}, y) f(x_{i}, x_{j}, y) \, dx_{i}\,
     dx_{j}\, dy \\
     & \qquad - \left(\int g(x_{i}, x_{j}, y) f(x_{i}, x_{j}, y)
       \, dx_{i}\, dx_{j}\, dy\right)^{2} \\
     =\ & \Lambda(f, \eta).
   \end{align*}
   We conclude by noticing that $\V({\sqrt{n} \bigl(P_{n}L\bigr)}) =
   \V({L\bigl(X_{i1}, X_{j1}, Y_1\bigr)})$.
 \end{proof}

 \begin{lem}[Asymptotics for $\sqrt{n}(\hat{Q}-Q)$ ]
   \label{lem:Asymp-n-hatQ-Q}
   Under the assumptions of Theorem \ref{thm:taylor:Asymp-norm-Tf-ij},
   we have
   \[
   \lim_{n\to\infty}n\, \E[{\hat{Q}-Q}]^{2}=0.
   \]

 \end{lem}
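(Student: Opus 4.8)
The plan is to identify $\hat{Q}$ and $Q$ with the estimator $\hat{\theta}_{n}$ and the functional $\theta$ of Section~\ref{sec:Estimation-of-quadratic} for the particular choice $\eta(x_{i1},x_{j2},y)=H_{2}(\hat{f},x_{i1},x_{j2},y)$, and then to invoke Theorem~\ref{thm:quad-estimator}. Since $\hat{f}$ is built from the first subsample of size $n_{1}\approx n/\log n$ while $\hat{Q}$ uses the independent second subsample of size $n_{2}=n-n_{1}$, I would argue conditionally on the first subsample: given $\hat{f}$, the function $\eta=H_{2}(\hat{f},\cdot)$ is deterministic, and by Assumptions~\ref{ass:A2}--\ref{ass:A3} (bounded domain, and $f,\hat{f}$ bounded below by a positive constant) its sup-norm $\norm{\eta}_{\infty}$ is bounded by a constant independent of the realization of $\hat{f}$. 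As the constant $\gamma$ of Theorem~\ref{thm:quad-estimator} is increasing in $\norm{\eta}_{\infty}$, $\norm{f}_{\infty}$ and the domain size, it may be taken uniform, so that
\[
n_{2}\E{(\hat{Q}-Q)^{2}\mid\hat{f}}\le\Lambda(f,\eta)+\gamma\left[\frac{\abs{M_{n_{2}}}}{n_{2}}+\norm{S_{M_{n_{2}}}f-f}_{2}+\norm{S_{M_{n_{2}}}g-g}_{2}\right].
\]

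The decisive step is to show $\Lambda(f,\eta)\to0$ for this $\eta$. First I would compute the associated $g$ explicitly. Writing $\psi(x_{i1},x_{j1},x_{i2},x_{j2},y)=\eta(x_{i1},x_{j2},y)+\eta(x_{i2},x_{j1},y)$ and integrating the two summands against $f(x_{i2},x_{j2},y)$, the inner integrals reproduce the true conditional means, yielding
\[
g(x_{i},x_{j},y)=\frac{\int f(x_{i}',x_{j}',y)\,dx_{i}'dx_{j}'}{\int\hat{f}(x_{i}',x_{j}',y)\,dx_{i}'dx_{j}'}\Big[(x_{i}-m_{i}(\hat{f},y))\bigl(m_{j}(f,y)-m_{j}(\hat{f},y)\bigr)+(x_{j}-m_{j}(\hat{f},y))\bigl(m_{i}(f,y)-m_{i}(\hat{f},y)\bigr)\Big].
\]
The key observation is that $g$ is proportional to the centering errors $m_{i}(f,y)-m_{i}(\hat{f},y)$ and $m_{j}(f,y)-m_{j}(\hat{f},y)$, which vanish as $\hat{f}\to f$. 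A standard quotient bound, using that the denominators $\int\hat{f}\,dx_{i}dx_{j}$ are bounded away from $0$ and the domain is bounded (Assumptions~\ref{ass:A2}--\ref{ass:A3}), gives $\abs{m_{i}(f,y)-m_{i}(\hat{f},y)}\le C\int\abs{f-\hat{f}}\,dx_{i}dx_{j}$ and likewise for $j$; after a Cauchy--Schwarz step this yields $\norm{g}_{2}\le C'\norm{\hat{f}-f}_{2}$ with $C'$ depending only on $\norm{f}_{\infty}$ and the domain.

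It then remains to take expectations over the first subsample. Since $\Lambda(f,\eta)=\V{g(X_{i},X_{j},Y)}\le\norm{f}_{\infty}\norm{g}_{2}^{2}$ and, by the projection property, $\norm{S_{M_{n_{2}}}g-g}_{2}\le\norm{g}_{2}$, the conditional bound integrates to
\[
n_{2}\E{(\hat{Q}-Q)^{2}}\le\norm{f}_{\infty}\E{\norm{g}_{2}^{2}}+\gamma\left[\frac{\abs{M_{n_{2}}}}{n_{2}}+\norm{S_{M_{n_{2}}}f-f}_{2}+\E{\norm{g}_{2}}\right].
\]
The moment bound of Assumption~\ref{ass:A3} gives $\E{\norm{g}_{2}^{2}}\le C''\E{\norm{\hat{f}-f}_{2}^{2}}=O(n_{1}^{-2\lambda})\to0$, hence also $\E{\norm{g}_{2}}\to0$; meanwhile $\abs{M_{n_{2}}}/n_{2}\to0$ by the hypothesis $\abs{M_{n}}/n\to0$ and $\norm{S_{M_{n_{2}}}f-f}_{2}\to0$ by Assumption~\ref{ass:A1}. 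Thus $n_{2}\E{(\hat{Q}-Q)^{2}}\to0$, and since $n/n_{2}\to1$ we conclude $n\E{(\hat{Q}-Q)^{2}}\to0$. The main obstacle is the explicit computation of $g$ together with the quotient estimate $\norm{g}_{2}\le C'\norm{\hat{f}-f}_{2}$, which is exactly what forces the quadratic part of $\widehat{T}_{ij}^{(n)}$ to be asymptotically negligible; everything else follows directly from Theorem~\ref{thm:quad-estimator} and the standing assumptions.
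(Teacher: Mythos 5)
Your proof is correct, and it follows the paper's overall skeleton: condition on $\hat{f}$, apply Theorem \ref{thm:quad-estimator} with $\eta=H_{2}(\hat{f},\cdot)$ and a uniform constant $\gamma$, decondition, and show that the $\Lambda$-term and the projection-error term vanish. Where you genuinely diverge is in the central estimate. The paper never computes the function $\hat{g}(x_{i},x_{j},y)=\int H_{3}(\hat{f},x_{i},x_{j},x_{i2},x_{j2},y)f(x_{i2},x_{j2},y)dx_{i2}dx_{j2}$ in closed form; instead it compares $\hat{g}$ to its true-$f$ analogue $g(x_{i},x_{j},y)=\int H_{3}(f,\cdot)f\,dx_{i2}dx_{j2}$ via a mean-value-theorem bound $\norm{\hat{g}-g}_{2}^{2}\leq\delta\Dx 2\norm f_{\infty}^{2}\norm{\hat{f}-f}_{2}^{2}$ (with an unspecified constant $\delta$), and then observes that $g\equiv 0$ because integrating $x_{j2}-m_{j}(f,y)$ against $f$ produces an exactly centered factor; it also needs Assumption \ref{ass:A1} a second time, through the triangle-inequality split $\E{\norm{S_{M}\hat{g}-\hat{g}}_{2}}\leq 2\E{\norm{\hat{g}-g}_{2}}+\E{\norm{S_{M}g-g}_{2}}$. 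You instead compute $\hat{g}$ explicitly, exhibiting it as proportional to the conditional-mean errors $m_{i}(f,y)-m_{i}(\hat{f},y)$ and $m_{j}(f,y)-m_{j}(\hat{f},y)$ (your formula is correct), and derive the quotient bound $\norm{\hat{g}}_{2}\leq C^{\prime}\norm{\hat{f}-f}_{2}$ directly; this merges the paper's two steps (MVT comparison plus the identity $g\equiv 0$) into one transparent computation, and it lets you dispatch the projection error trivially via $\norm{S_{M}\hat{g}-\hat{g}}_{2}\leq\norm{\hat{g}}_{2}$, with no further appeal to Assumption \ref{ass:A1} for that term. Your version is more self-contained and makes the structural reason for $\Lambda(f,\eta)\to 0$ visible, at the cost of a slightly longer explicit calculation; you also handle the $n$ versus $n_{2}$ normalization explicitly, which the paper glosses over. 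The only shared (and equally implicit in the paper) technical debt is the lower bound on $\int\hat{f}(x_{i},x_{j},y)dx_{i}dx_{j}$ needed for the quotient estimates, which both arguments take from Assumptions \ref{ass:A2}--\ref{ass:A3} without further comment.
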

 \begin{proof}[\textbf{Proof of Lemma \ref{lem:Asymp-n-hatQ-Q}}]
   The bound given in \eqref{eq:taylor:Bound-MSE-quad} states that if
   $\vert{M_{n}}\vert/n\to0$ we have
   \begin{align*}
     & \Biggl| n \, \E[{\bigl(\widehat{Q} - Q\bigr)^{2} \vert\hat{f}}]
     - \Biggl[\int \hat{g}(x_{i}, x_{j}, y)^{2} f(x_{i}, x_{j}, y)
     \, dx_{i}\, dx_{j}\, dy \\
     & \qquad - \left(\int\hat{g}(x_{i}, x_{j}, y) f(x_{i}, x_{j}, y)
       \, dx_{i}\, dx_{j}\,
       dy\right)^{2}\Biggr]\Biggr| \\
     & \leq \gamma\bigl(\Vert f\Vert_{\infty},
     \Vert{\eta}\Vert_{\infty}, \Delta \bigr)
     \left[\frac{\vert{M_{n}}\vert}{n} + \Vert{S_{M}f-f}\Vert_{2} +
       \Vert{S_{M}\hat{g}-\hat{g}}\Vert_{2}\right]
   \end{align*}
   where
   \[
   \hat{g}(x_{i}, x_{j}, y)=\int H_{3}(\hat{f}, x_{i}, x_{j}, x_{i2},
   x_{j2}, y)f(x_{i2}, x_{j2}, y)\, dx_{i2}\, dx_{j2},
   \]
   where we recall that
   \begin{equation*}
     H_{3}(f, x_{i1}, x_{j1}, x_{i2}, x_{j2}, y) =
     H_{2}(f, x_{i1}, x_{j2}, y) + H_{2}(f, x_{i2}, x_{j1}, y)
   \end{equation*}
   with
   \[
   H_{2}(\hat{f}, x_{i1}, x_{j2}, y) = \frac{\bigl(x_{i1} -
     m_{i}(\hat{f}, y)\bigr)\bigl(x_{j2} - m_{j}(\hat{f},
     y)\bigr)}{\int\hat{f}(x_{i}, x_{j}, y) \, dx_{i}\, dx_{j}}.
   \]

   By deconditioning we get
   \begin{align*}
     & \Biggl|n\, \E[{\bigl(\widehat{Q}-Q\bigr)^{2}} ]-
     \mathbb{E}\Biggl[\int\hat{g}(x_{i}, x_{j}, y)^{2} f(x_{i}, x_{j},
     y) \, dx_{i}\, dx_{j}\, dy \\
     & \qquad - \left(\int\hat{g}(x_{i}, x_{j}, y) f(x_{i}, x_{j},
       y)\, dx_{i}\, dx_{j}\,
       dy\right)^{2}\Biggr]\Biggr| \\
     & \leq \gamma\bigl(\Vert f\Vert_{\infty},
     \Vert{\eta}\Vert_{\infty}, \Delta \bigr)
     \left[\frac{\vert{M_{n}}\vert}{n} + \Vert{S_{M}f-f}\Vert_{2} +
       \E[{\Vert{S_{M}\hat{g}-\hat{g}}\Vert_{2}}]\right]
   \end{align*}
   Note that
   \begin{align*}
     \E[{\Vert{S_{M_{n}}\hat{g} - \hat{g}}\Vert_{2}}] \leq &
     \E[{\Vert{S_{M}\hat{g}-S_{M}g}\Vert_{2}}] +
     \E[{\Vert{\hat{g}-g}\Vert_{2}}]
     + \E[{\Vert{S_{M_{n}}g-g}\Vert_{2}}] \\
     & \leq 2\E[{\Vert{\hat{g}-g}\Vert_{2}}] +
     \E[{\Vert{S_{M_{n}}g-g}\Vert_{2}}]
   \end{align*}
   where $g(x_{i}, x_{j}, y) = \int H_{3}(f, x_{i}, x_{j}, x_{i2},
   x_{j2}, y) f(x_{i2}, x_{j2}, y) \, dx_{i2}\, dx_{j2}$. %

   The second term converges to 0 since $g\in\mathbb{L}^{2}(\, dx\,
   dy\, dz)$ and for all $t$ belonging to $\mathbb{L}^{2}(\, dx\, dy\,
   dz)$, $\int\left(S_{M}t-t\right)^{2} \, dx\, dy\, dz \rightarrow
   0$. %
   Moreover
   \begin{align*}
     & \Vert{\hat{g}-g}\Vert_{2}^{2} \\
     =\ & \int\left[ \hat{g}(x_{i}, x_{j}, y) - g(x_{i}, x_{j},
       y)\right]^{2}
     \, dx_{i}\, dx_{j}\, dy \\
     =\ & \int\Bigg[ \int\Big(H_{3}(\hat{f}, x_{i}, x_{j}, x_{i2},
     x_{j2}, y) \\
     & \qquad - H_{3}(f, x_{i}, x_{j}, x_{i2}, x_{j2}, y) \Big)
     f(x_{i2}, x_{j2}, y)
     \, dx_{i2}\, dx_{j2}\Bigg]^{2}\, dx_{i}\, dx_{j}\, dy \\
     \leq\ & \int\Bigg[ \int\Big(H_{3}(\hat{f}, x_{i}, x_{j}, x_{i2},
     x_{j2}, y) \\
     & \qquad - H_{3}(f, x_{i}, x_{j}, x_{i2}, x_{j2}, y)
     \Big)^{2} \, dx_{i2}\, dx_{j2}\Bigg] \\
     & \qquad \left[\int f(x_{i2}, x_{j2}, y)^{2} \, dx_{i2}\,
       dx_{j2}\right] \, dx_{i}\, dx_{j}\, dy \\
     \leq\ & \Delta \Vert f\Vert_{\infty}^{2} \int\Big(H_{2}(\hat{f},
     x_{i}, x_{j}, x_{i2}, x_{j2}, y) \\
     & \qquad - H_{2}(f, x_{i}, x_{j}, x_{i2}, x_{j2}, y) \Big)^{2}
     \, dx_{i}\, dx_{j}\, dx_{i2}\, dx_{j2}\, dy\\
     \leq\ & \delta\Delta^2\Vert f\Vert_{\infty}^{2}
     \int\left(\hat{f}(x_{i}, x_{j}, y) - f(x_{i}, x_{j},
       y)\right)^{2} \, dx_{i}\, dx_{j}\, dy
   \end{align*}
   for some constant $\delta$ that comes out of applying the mean
   value theorem to $H_{3}(\hat{f}, x_{i}, x_{j}, x_{i2}, x_{j2}, y) -
   H_{3}(f, x_{i}, x_{j}, x_{i2}, x_{j2}, y)$. %
   The constant $\delta$ was taken under Assumptions
   \ref{ass:taylor:A1}-\ref{ass:taylor:A3}. %
   Since $\E[{\Vert{f-\hat{f}}\Vert_{2}}]\to0$ then
   $\E[{\Vert{g-\hat{g}}\Vert_{2}}]\to0$. %
   Now show that the expectation of
   \begin{multline*}
     \int\hat{g}(x_{i}, x_{j}, y)^{2} f(x_{i}, x_{j}, y)\, dx_{i}\,
     dx_{j}\, dy \\
     - \left(\int\hat{g}(x_{i}, x_{j}, y) f(x_{i}, x_{j}, y) \,
       dx_{i}\, dx_{j}\, dy\right)^{2}
   \end{multline*}
   converges to 0. We develop the proof for only the first term. We
   get
   \begin{align*}
     & \Bigg\vert{\int\hat{g}(x_{i}, x_{j}, y)^{2}f(x_{i}, x_{j}, y)\,
       dx_{i}\, dx_{j}\, dy-\int g(x_{i}, x_{j}, y)^{2}f(x_{i}, x_{j},
       y)\, dx_{i}\, dx_{j}\, dy}\Bigg\vert \\
     & \leq \int\Big\vert{\hat{g}(x_{i}, x_{j}, y)^{2} - g(x_{i},
       x_{j}, y)^{2}} \Big \vert f(x_{i}, x_{j}, y)\,
     dx_{i}\, dx_{j}\, dy \\
     & \leq \lambda\int\left(\hat{g}(x_{i}, x_{j}, y) -
       g(x_{i}, x_{j}, y)\right)^{2} \, dx_{i}\, dx_{j}\, dy \\
     & = \lambda\Vert{\hat{g}-g}\Vert_{2}^{2}
   \end{align*}
   for some constant $\lambda$. %
   By taking the both sides expectation, we see it is enough to show
   that $\E[{\Vert{\hat{g}-g}\Vert_{2}^{2}}]\to0$. Besides, we can
   verify
   \begin{align*}
     g(x_{i}, x_{j}, y) =\ & \int H_{3}(f, x_{i}, x_{j}, x_{i2},
     x_{j2}, y)
     f(x_{i2}, x_{j2}, y) \, dx_{i2}\, dx_{j2} \\
     =\ & \frac{2}{\int f(x_{i}, x_{j}, y) \, dx_{i}\, dx_{j}}
     \left(x_{i}-\hat{m}_{i}(y)\right) \\
     & \left(\int x_{j2}f(x_{i2}, x_{j2}, y)\, dx_{i2}\, dx_{j2} -
       \hat{m}_{j}(y)\int f(x_{i2}, x_{j2}, y) \, dx_{i2}\,
       dx_{j2}\right) \\
     =\ & 0
   \end{align*}
   which proves that the expectation of $\int\hat{g}(x_{i}, x_{j},
   y)^{2} f(x_{i}, x_{j}, y)\, dx_{i}\, dx_{j}$ converges to 0. %
   Similar computations shows that the expectation of
   \begin{equation*}
     \left(\int\hat{g}(x_{i}, x_{j}, y) f(x_{i}, x_{j}, y) \, dx_{i}\,
       dx_{j}\right)^{2}
   \end{equation*}
   also converges to 0. %

   Finally we have
   \begin{equation*}
     \lim_{n\to\infty}n\, \E\bigl[\widehat{Q}-Q\bigr]^{2}=0. 
     \qedhere
   \end{equation*}
 \end{proof}


\end{document}